\documentclass{amsart}
\usepackage{amsthm,amstext,amsmath,amscd,amssymb,latexsym}
\usepackage{color}
\usepackage{amsfonts,enumerate,array}
\usepackage[colorlinks=true]{hyperref}
\usepackage{todonotes}
	\usepackage{verbatim}
\usepackage{float}	
\usepackage{pgf,tikz}
\usepackage[toc,page]{appendix}
\usepackage[all,cmtip]{xy}
\usepackage{stmaryrd}
\usepackage{xparse}
\usepackage{listings}

\sloppy

\newcommand{\CC}{{\mathbb C}}
\newcommand{\PP}{{\mathbb P}}

\newcommand{\pp}{{\mathbb P}}

\newcommand{\Bl}{{\rm{Bl}}}
\newcommand{\calZ}{{\mathcal{Z}}}

\NewDocumentCommand{\dslash}{s}{%
  \IfBooleanTF{#1}
    {\big/\mkern-7mu\big/}
    {/\mkern-6mu/}%
}

\DeclareMathOperator{\rank}{rank}

\DeclareMathOperator{\Pic}{Pic}

\DeclareMathOperator{\Cox}{Cox}

\def\Bl{\operatorname{Bl}}

\newcommand{\paper}{: \begin{it}}
\newcommand{\jour }{, \end{it}}

\newtheorem{theorem}{Theorem}[section]
\newtheorem{lemma}[theorem]{Lemma}
\newtheorem{proposition}[theorem]{Proposition}
\newtheorem{corollary}[theorem]{Corollary}

\newtheorem{question}[theorem]{Question}
\theoremstyle{definition}
\newtheorem{definition}[theorem]{Definition}
\newtheorem{notation}[theorem]{Notation}

\theoremstyle{remark}
\newtheorem{remark}[theorem]{Remark}

\numberwithin{equation}{section}

\title[Newton-Okounkov bodies $\&$ Toric Degenerations of Mori dream spaces]{Newton-Okounkov bodies and Toric Degenerations of Mori dream spaces via Tropical compactifications}
\author{Elisa Postinghel}
\address{ Department of Mathematical Sciences, Loughborough University, Leicestershire LE11 3TU, United Kingdom}
\email{e.postinghel@lboro.ac.uk}
\author{Stefano Urbinati}
\address{Dipartimento di Matematica, Politecnico di Milano, via Bonardi 9, Milano, Italy}
\email{urbinati.st@gmail.com}

\subjclass{}

\date{}
\thanks{The second author was partially supported by the European Commission, Seventh Framework Programme, Grant Agreement n$^{\circ}$ 600376.}

\subjclass[2010]{Primary: 14C20. Secondary: 14E25, 14M25, 14E30}

\begin{document}
\begin{abstract}
Given a smooth Mori dream space $X$ we construct a model dominating all the small $\mathbb{Q}$-factorial modifications via tropicalization. 
This construction allows us to recover a Minkowski basis for the Newton-Okounkov bodies of divisors on  $X$ and hence the movable cone of $X$. 
The existence of such basis allows us to prove the polyhedrality of the global Newton-Okounkov body and the existence of toric degenerations.
\end{abstract}

\maketitle

\section{Introduction}

In this work we make a connection between the theory of Newton-Okounkov bodies and tropical geometry, sharing the aim of the recent preprints \cite{KU,KM}.
The results of the present parer yield a different point of view on how tropical geometry can be extremely helpful in birational geometry. 

Our principal aim is to  study Mori dream spaces (MDS) via tropicalization. Via this connection we will describe a simple and computable way of reconstructing the movable cone of such varieties. 
The results obtained complete the picture introduced in \cite{LS,PSU}, where Minkowski bases for Newton-Okounkov bodies were given respectively for surfaces and for toric varieties, with respect to certain admissible flags.

Mori dream spaces are special projective varieties for which the Minimal Model Program is particularly simple, since they only admit a finte number of \emph{small $\mathbb{Q}$-factorial modifications} (SQM's). 
The key property of these varieties is that they always admit a particularly nice embedding into toric varieties, see \cite{HK}. Given a MDS $X$ and such an embedding $X \subseteq Z$ into a toric variety, let $T \subseteq Z$ be the maximal torus of the given toric variety. The main result of this paper, that is Theorem \ref{main}, can be summarized as follows:

\begin{theorem}\label{main thm 1 intro}
Let $X\subset Z$ be a MDS with the embedding of \cite{HK} into a toric variety $Z$. Then the tropicalization of the variety restricted to $T$, ${\rm Trop}(X|_T)$, induces a  model $h:\overline{X}\to X$, embedded in a toric variety $j:\overline{X}\subset\overline{Z}$, that dominates all the SQM's induced by the Minimal Model Program.  Moreover the fan of $\overline{Z}$ is supported on ${\rm Trop}(X|_T)$.
\end{theorem}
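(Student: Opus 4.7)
The approach is to combine the Hu-Keel embedding of a Mori dream space into a toric variety with Tevelev's theory of tropical compactifications. The two ingredients match naturally: Hu-Keel produces a toric ambient $Z \supseteq X$ in which every SQM of $X$ is induced by a toric birational modification of $Z$, while Tevelev's construction yields, starting from ${\rm Trop}(X|_T)$, a canonical toric ambient in which the closure of $X \cap T$ is proper and behaves functorially under refinement.

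First, I would unpack the Hu-Keel setup. The SQMs $X_1,\dots,X_r$ of $X$ are realized as closures of $X \cap T$ inside a family of toric varieties $Z_1,\dots,Z_r$, obtained as the GIT quotients of $\Spec \Cox(X)$ for the chambers of the Mori decomposition of the movable cone. Consequently the fans $\Sigma_i$ of the $Z_i$ live in a common vector space (the Picard group tensored with $\rr$) and are obtained from each other by wall-crossing.

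Second, I would invoke Tevelev's theorem: for any fan $\Sigma$ with $|\Sigma| = {\rm Trop}(X|_T)$, the toric variety $Z_\Sigma$ contains the closure $\overline{X}_\Sigma$ of $X \cap T$ as a tropical compactification, i.e.\ the multiplication map $T \times \overline{X}_\Sigma \to Z_\Sigma$ is faithfully flat; in particular $\overline{X}_\Sigma$ is proper, and a refinement $\Sigma' \to \Sigma$ supported on ${\rm Trop}(X|_T)$ induces a proper birational morphism $\overline{X}_{\Sigma'} \to \overline{X}_\Sigma$. Then I would argue that each $\Sigma_i$ has support contained in ${\rm Trop}(X|_T)$: by the fundamental theorem of tropical geometry, the torus orbits of $Z_i$ that actually meet $X_i$ are precisely those indexed by cones of ${\rm Trop}(X|_T)$, and the Hu-Keel hypothesis forces every orbit of $Z_i$ to meet $X_i$, so no cone of $\Sigma_i$ can sit outside the tropical locus. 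Taking $\Sigma$ to be a common refinement of $\Sigma_1,\dots,\Sigma_r$, completed if necessary to cover all of ${\rm Trop}(X|_T)$, and setting $\overline{Z} := Z_\Sigma$ and $\overline{X} := \overline{X}_\Sigma$, Tevelev functoriality produces proper birational morphisms $\overline{X} \to X_i$ for each $i$, so that $\overline{X}$ dominates every SQM while by construction the fan of $\overline{Z}$ is supported on ${\rm Trop}(X|_T)$.

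The main obstacle I anticipate is the containment $|\Sigma_i| \subseteq {\rm Trop}(X|_T)$ for all $i$ simultaneously. This requires translating the Mori/GIT chamber combinatorics of $X$ into the tropical language, and is precisely the bridge that the theorem is asserting. Once this compatibility is secured, the existence of the common toric refinement $\overline{Z}$, the construction of $\overline{X}$ as its tropical compactification, and the existence of dominating morphisms to each SQM are all consequences of the standard functorial properties of tropical compactifications.
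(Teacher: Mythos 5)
Your overall strategy (combine the Hu--Keel embedding with Tevelev's theory of tropical compactifications and take a common refinement dominating all GIT quotients) matches the spirit of the paper's proof, but one step, which you flag yourself as the main obstacle, is in fact false as you state it, and the surrounding argument does not repair it.

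You write that the Hu--Keel hypothesis forces every torus orbit of $Z_i$ to meet $X_i$, hence that $|\Sigma_i| \subseteq {\rm Trop}(X|_T)$. This cannot hold: each $Z_i$ is a projective toric variety, so its fan $\Sigma_i$ is complete with support all of $N_{\mathbb{R}}$, whereas ${\rm Trop}(X|_T)$ is a balanced fan of dimension $\dim X$, which is strictly smaller than $\dim Z_i$ whenever $\rho(X)>1$. Proposition~\ref{hu-keel embedding} gives an isomorphism of Picard groups, so every $T$-invariant prime \emph{divisor} restricts nontrivially to $X$, but it says nothing about higher-codimension orbits meeting $X$, and they generally do not. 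The paper's own Example~1 makes this explicit: for $X = \Bl_4(\PP^2)\subset Z=\Bl_{\Pi_1,\dots,\Pi_4}\PP^5$, the cone $\{0,1\}$ of $\Sigma_Z$ corresponds to a point orbit that misses $X$. The object that is supported on ${\rm Trop}(X|_T)$ is only the \emph{subfan} $\Sigma_{\overline{Z},\overline{X}}$ of cones whose orbit closures meet $\overline{X}$ (Notation~\ref{notation subfan}), and this distinction is precisely the content of part~(2) of Theorem~\ref{main}; the introductory phrase ``the fan of $\overline{Z}$'' is shorthand for that subfan. The paper is careful here: $\overline{Z}$ is complete and projective (identified with a Chow quotient), while the tropical compactification lives in the associated toric variety $\mathbb{T}_{\overline{Z},\overline{X}}$, which is never complete.

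Two further points where you diverge from or fall short of the paper's proof. First, you assert that every fan $\Sigma$ with $|\Sigma|={\rm Trop}(X|_T)$ gives a tropical compactification; Tevelev proves existence of some such fan and stability under refinement, but an arbitrary fan supported on the tropicalization need not yield faithfully flat multiplication. Second, even after taking the common refinement of the Mori chambers, combinatorial normal crossings of the boundary is not automatic; the paper's proof notes that additional star subdivisions may be required to achieve it, and that these are exactly what produces $\overline{Z}$. So the missing ingredients are: replace ``fan of $Z_i$'' with the subfan of orbits meeting $X_i$ throughout; and account for the extra subdivisions needed for the tropical compactification property, which the paper does by recognizing $\overline{Z}$ as the Chow quotient (Proposition~\ref{GIT}), the minimal normal toric model dominating all GIT quotients.
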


Via this construction we are able to prove several consequences.
First of all, the map $h$ is given as an embedded map into toric varieties and this allows us to obtain a Minkowski basis for the Newton-Okounkov bodies on $X$. In particular we can reconstruct the movable cone of $X$. The main result is the following, more details are given in Lemma \ref{flag} and Theorem \ref{base}.

\begin{theorem} \label{main thm 2 intro} 
In the notation of Theorem \ref{main thm 1 intro},  certain admissible flags on $\overline{Z}$ induce admissible flags on $X$, such that if $\{D^Z_i\}_{i\in I}$ is a set of generators of the nef cone of $\overline{Z}$, then  $\{D_i:=h_*j^*D^Z_i\}_{i\in I}$ is a Minkowski basis for $X$ with respect to induced flag. \end{theorem}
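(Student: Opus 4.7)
My strategy is to lift Newton-Okounkov computations from the toric variety $\overline{Z}$ to $X$ through the birational model $h$, exploiting the toric identification of NO bodies with lattice polytopes.

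\textbf{Setting up the flags.} First I would fix a complete flag of torus-invariant subvarieties $Y^Z_\bullet$ on $\overline{Z}$ coming from a maximal chain of cones in the fan. Since the fan of $\overline{Z}$ is supported on $\mathrm{Trop}(X|_T)$ by Theorem \ref{main thm 1 intro}, the strata of the toric stratification meet $\overline{X}$ properly, so the scheme-theoretic intersection with $\overline{X}$ is a flag of irreducible subvarieties of the correct codimension on $\overline{X}$. Pushing forward along the birational map $h$ then produces an admissible flag $Y_\bullet$ on $X$; this is the content of Lemma \ref{flag}.

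\textbf{NO bodies on $\overline{Z}$ and restriction to $\overline{X}$.} For each generator $D_i^Z$ of the nef cone of $\overline{Z}$, the Newton-Okounkov body $\Delta_{Y^Z_\bullet}(D_i^Z)$ is the lattice polytope $P_{D_i^Z}$ of the toric divisor, and toric additivity gives $\Delta_{Y^Z_\bullet}\bigl(\sum a_i D_i^Z\bigr) = \sum a_i P_{D_i^Z}$. To transfer this to $\overline{X}$ I would compare the valuation $\nu_{Y^Z_\bullet}$ on $\overline{Z}$ with the induced valuation on $\overline{X}$: because the fan is supported on $\mathrm{Trop}(X|_T)$, the toric boundary divisors restrict to prime divisors on $\overline{X}$, and the restriction map $\HH^0(\overline{Z}, mD_i^Z) \to \HH^0(\overline{X}, m j^* D_i^Z)$ is surjective for $m \gg 0$ (a consequence of Cox-ring generation coming from the HK embedding). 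Hence $\Delta_{Y_\bullet|_{\overline{X}}}(j^*D_i^Z) = P_{D_i^Z}$.

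\textbf{Pushforward to $X$ and the Minkowski basis property.} Since $h$ is an isomorphism in codimension one, $h_*$ preserves global sections of divisors, yielding $\Delta_{Y_\bullet}(h_* j^* D_i^Z) = P_{D_i^Z}$. By Theorem \ref{main thm 1 intro} the model $\overline{X}$ dominates every SQM of $X$, so the movable cone of $X$ decomposes as a union of pulled-back nef cones, each of which lies inside the image under $h_* j^*$ of the nef cone of $\overline{Z}$. Every movable divisor $D$ on $X$ therefore admits a positive expression $D = \sum a_i D_i$, and additivity of Minkowski sums for toric polytopes gives $\Delta_{Y_\bullet}\bigl(\sum a_i D_i\bigr) = \sum a_i \Delta_{Y_\bullet}(D_i)$, establishing the Minkowski basis property asserted in Theorem \ref{base}.

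\textbf{Main obstacle.} The delicate point is the valuation-matching in the second step: one has to verify that the restricted flag $Y^Z_\bullet|_{\overline{X}}$ computes the same leading terms as $Y^Z_\bullet$ on those sections that extend from $\overline{Z}$, and that no sections of $j^*D_i^Z$ are missed. This is precisely where the tropical compactification hypothesis is essential, because $\mathrm{Trop}(X|_T)$ encodes the orders of vanishing of functions on $\overline{X}$ along toric boundary strata; a careful local analysis near the torus-fixed point corresponding to the maximal cone of the flag will be required.
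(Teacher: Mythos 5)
Your overall plan (compute on $\overline{Z}$ via toric polytopes, then transfer to $\overline{X}$ and finally to $X$) matches the paper's, but the middle and final steps have real problems.

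First, a dimension mismatch. You assert $\Delta_{Y_\bullet|_{\overline{X}}}(j^*D_i^Z) = P_{D_i^Z}$, but the left-hand side lives in $\mathbb{R}^n$ (with $n=\dim X=\dim\overline{X}$) while the lattice polytope $P_{D_i^Z}$ lives in $\mathbb{R}^m$ with $m=\dim\overline{Z}>n$. The statement one actually needs, and what the paper proves, is that the Newton-Okounkov body on $\overline{X}$ equals the \emph{projection} (truncation to the first $n$ valuation coordinates) of the toric body: $\Delta_{Y_\bullet}(D)=\pi(\Delta_{Y_\bullet^Z}(D^Z))$. You have implicitly conflated the two lattices.

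Second, the key inclusion $\Delta_{Y_\bullet}(D)\subseteq\pi(\Delta_{Y_\bullet^Z}(D^Z))$ is the hard direction, and your argument for it leans on surjectivity of the restriction maps $\HH^0(\overline{Z}, mD^Z_i)\to\HH^0(\overline{X}, m j^*D^Z_i)$, which you dismiss as ``a consequence of Cox-ring generation.'' That may be defensible at the level of $X\subset Z$, but it is not obvious that it survives the passage to the blow-ups $\overline{X}\subset\overline{Z}$, and the paper does not take this route. Instead it uses the restricted volume identity $\mathrm{vol}_{\overline{Z}|\overline{X}}(A)=\mathrm{vol}_{\overline{X}}(A|_{\overline{X}})=A^n\cdot\overline{X}$ for $A$ ample (citing Ein--Lazarsfeld--Musta{\c{t}}{\u{a}}--Nakamaye--Popa), which shows the easy inclusion $\pi(\Delta_{Y_\bullet^Z}(D^Z))\subseteq\Delta_{Y_\bullet}(D)$ is already an equality of full-dimensional convex bodies, then propagates this to all movable divisors using the SQM structure and limits of amples. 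Your surjectivity claim needs either a proof or a citation, and as stated it is a genuine gap.

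Third, the final pushforward step is based on the claim that ``$h$ is an isomorphism in codimension one,'' which is false in general: $\overline{X}\to X$ is a sequence of blow-ups (in Example 2 it is a blow-up of $10$ lines), so it introduces new divisors. The paper avoids this by regarding the flag on $\overline{X}$ as an \emph{infinitesimal flag} for $X$ and computing Okounkov bodies of divisors on $X$ directly with respect to that flag, rather than pushing anything forward. You should replace the pushforward-of-sections argument with this point of view, or else explain carefully why the valuation vectors are unaffected by $h$.

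Finally, you only address property (1) of the Minkowski-basis definition (additivity of Newton-Okounkov bodies); indecomposability of the $\Delta_{Y_\bullet}(D_i)$ is not discussed at all, whereas the paper appeals to the toric Minkowski-basis result of Pokora--Schmitz--Urbinati and the fact that it is preserved under projection.
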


Moreover, another main result of this paper is that the Newton-Okounkov bodies of the Minkowski basis elements are rational and polyhedral. Moreover the value semigroup is finitely generated. This implies the following result:
\begin{theorem}\label{main thm 3 intro}
If $X$ is a smooth Mori dream space, the global Newton-Okounkov body of $X$ with respect to the flags obtained in Theorem \ref{main thm 2 intro} is a rational polyhedron. 

Moreover if $A$ is an ample line bundle on $X$, $(X,A)$ admits a flat embedded degeneration to a not necessarily normal toric variety whose normalisation is the toric variety defined the Newton-Okounkov body of $A$. 
\end{theorem}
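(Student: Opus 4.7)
The plan is to combine Theorem \ref{main thm 2 intro} with standard arguments connecting Minkowski bases to both the polyhedrality of global Newton-Okounkov bodies and to toric degenerations. First, each generator $D_i^Z$ of the nef cone of the toric variety $\overline{Z}$ is, up to linear equivalence, torus-invariant; its Newton-Okounkov body with respect to a torus-invariant admissible flag on $\overline{Z}$ is therefore the lattice polytope $P_{D_i^Z}$ of global sections, and the associated value semigroup is the finitely generated monoid of lattice points in the cone over $P_{D_i^Z}$. By the compatibility of flags asserted in Lemma \ref{flag}, these descend to the induced flag on $X$, so $\Delta(D_i) = P_{D_i^Z}$ is a rational polytope and the value semigroup of $D_i = h_*j^*D_i^Z$ is finitely generated.

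For the polyhedrality of the global body $\Delta(X)$, I would use the Minkowski basis property: for every $D$ in the movable cone $\Mov(X)$ there is a decomposition $\Delta(D)=\sum_i a_i(D)\, P_{D_i^Z}$ with $a_i(D)\geq 0$ depending piecewise-linearly on $D$. Because $\Mov(X)$ is rational polyhedral and the chambers of piecewise-linearity match the Mori chamber decomposition (which is finite), the graph
\[
\Delta(X) = \bigl\{(D,v) : D\in\Mov(X),\ v\in\Delta(D)\bigr\}\subset \NS(X)_{\mathbb R}\times\mathbb R^{\dim X}
\]
is a finite union of rational polyhedral cones, hence a rational polyhedron.

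For the toric degeneration I would invoke Anderson's theorem in its embedded form: given the ample divisor $A$, the section ring $R(X,A)=\bigoplus_k \HH^0(X,kA)$ carries a valuation whose value semigroup $S(A)$ I claim is finitely generated. Granting this, $\operatorname{gr} R(X,A)$ is the finitely generated semigroup algebra of $S(A)$, yielding a flat family over $\mathbb A^1$ whose generic fibre is the projective embedding of $X$ defined by $A$ and whose special fibre is the (not necessarily normal) toric variety $\operatorname{Proj}\operatorname{gr} R(X,A)$; its normalisation is the projective toric variety associated to the polytope $\Delta(A)$.

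The main obstacle is the finite generation of $S(A)$, which does not follow purely formally from the finite generation of each $S(D_i)$ because the grading by $A$ mixes Minkowski summands across chambers. The key input is the tropical model $\overline{X}\subset\overline{Z}$: pulling back along $h$ and restricting along $j$, sections of $kA$ arise from torus-invariant sections of a toric divisor on $\overline{Z}$, so the valuation along the chosen flag agrees with the standard toric valuation on $\overline{Z}$. Its value semigroup is the finitely generated monoid of lattice points in the cone over $\Delta(A)$, and this identification is preserved under pushforward. With $S(A)$ in hand, the toric degeneration is a direct application of the standard finitely generated Okounkov body construction.
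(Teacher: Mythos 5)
Your argument follows the paper's line almost exactly: the polyhedrality of the global body is deduced from the Minkowski basis of Theorem \ref{main thm 2 intro} (this is the paper's Corollary \ref{global}), and the toric degeneration is obtained via Anderson's criterion by arguing that the value semigroup of $A$ is finitely generated because its valuation vectors come from restrictions of torus-invariant sections on $\overline{Z}$ (this is the paper's Theorem \ref{degeneration}). One imprecision to correct: the identity $\Delta(D_i)=P_{D_i^Z}$ cannot hold literally since $\Delta(D_i)\subset\mathbb{R}^{n}$ while $P_{D_i^Z}\subset\mathbb{R}^{m}$ with $m=\dim\overline{Z}>n$; the correct statement, as in the proof of Theorem \ref{base}, is $\Delta(D_i)=\pi\bigl(P_{D_i^Z}\bigr)$ for the truncation map $\pi:\mathbb{R}^m\to\mathbb{R}^n$, and similarly the value semigroup of $D_i$ is the image under $\pi$ of the toric value semigroup (finitely generated because the image of a finitely generated monoid is finitely generated). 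With that fix your argument is essentially the paper's.
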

Details are given in Corollary \ref{global} and Theorem \ref{degeneration}. The first statement of Theorem \ref{main thm 3 intro} answers affirmatively a question posed by Lazarsfeld and Musta{\c{t}}{\u{a}} in \cite{LM}. The second statement is based on work of Anderson \cite{dave}.


The article is organized as follows. In section \ref{GS} we recall the notations and basic properties of MDS's and Cox rings. We review the constructions via GIT of the embedding of a MDS into a toric variety and we deduce the connection with tropical geometry. In section \ref{section tropicalization} we construct the universal model of MDS's via tropical geometry. In section \ref{MB}, after recalling the constructions of Newton-Okounkov bodies and Minkowski bases, we prove our main result, i.e. we show the existence of a Minkowski basis for a MDS, we show how to construct it and we prove Theorem \ref{main thm 3 intro}. Finally, in Section \ref{examples}, we give some examples of the construction, explicitly computed via the free software  \emph{Gfan} \cite{gfan}.

\section{General setting} \label{GS}
In this section we collect several results from \cite{GM, HK, KSZ, tevelev} providing the main connections between tropicalization and Mori dream spaces.

\subsection{Newton-Okounkov bodies and toric degenerations}

Let $X$ be a smooth complex projective variety of dimension $n$ over an algebraically closed field $k$ and let $D$ be a Cartier  divisor on $X$. 
Okounkov's construction associates to $D$ a convex body
$$
\Delta_{Y_{\bullet}}(D) \ \subseteq \ \mathbb{R}^n,
$$
which we call the \emph{Newton-Okounkov body}.
It depends on the choice of an \emph{admissible flag} $Y_{\bullet}$, i.e. a chain 
$$Y_0 =  X  \supset  Y_0  \supset  \ldots  \supset  Y_{n-1}  \supset  Y_n ,$$
where $Y_i$ is a subvariety of codimension $i$ in $X$ which is smooth at the point $Y_n$.
Using the flag, one defines a rank $n$ \emph{valuation} $\nu=\nu_{Y_{\bullet}}$ which, in turn, defines a \emph{graded semigroup} $\Gamma_{Y_{\bullet}}\subseteq \mathbb{N}\times\mathbb{N}^n$.
The convex body $\Delta_{Y_{\bullet}}(D)$ is the intersection of $\{1\}\times \mathbb{R}^n$ with the closure of the convex hull of $\Gamma_{Y_{\bullet}}$ in 
$\mathbb{R}\times\mathbb{R}^n$. 

One can also define the notion of \emph{global Newton-Okounkov body} of $X$ which is the closed convex cone in $\mathbb{R}^n\times N^1(X)_\mathbb{R}$ whose fibers over any big divisor $D$ on $X$ coincides with the Newton-Okounkov body $\Delta_{Y_{\bullet}}(D)$ of such divisor.
We refer to \cite{KK,LM}  for details on this construction.  

Newton-Okounkov bodies are quite difficult to compute in general. They often are not polyhedral and, when polyhedral, they may be not rational. Even when the body is polyhedral, the semigroup $\Gamma_{Y_{\bullet}}$ need not be finitely generated.

For toric varieties Newton-Okounkov bodies turn out to be nice. In fact
Lazarsfeld and Musta\c{t}\u{a} \cite{LM} proved that if $X$ is a smooth \emph{toric variety}, $D$ is a $T$-invariant ample divisor on $X$ and the $Y_i$'s are $T$-invariant subvarieties, then $\Delta_{Y_{\bullet}}(D)$ is the lattice polytope associated with $D$ in the usual toric construction. Moreover the global Newton-Okounkov bodies are rational polyhedral.

It is natural to investigate whether this construction behaves well for special classes of varieties.  A consequence of the work contained in 
\cite{BCHM,HK} is that divisors on Mori dream spaces often have toric-like behaviour, so it makes sense to pose the following question.

\begin{question}[{\cite[Problem 7.1]{LM}}]\label{question global body}
Let $X$ be a smooth Mori dream space. Does there exist an admissible flag with respect to which the global Newton-Okounkov body of $X$ is rational polyhedral?
\end{question}

In \cite{dave} Anderson extended the connection between Okounkov bodies and toric varieties, by introducing a geometric criterion for $\Delta_{Y_\bullet}(A)$ to be a lattice polytope for $A$ ample and, in this situation, by  constructing an embedded \emph{toric degeneration} of $(X,A)$.

\begin{theorem}[{\cite[Theorem 5.8]{dave}}]\label{theorem anderson} Let $A$ to be an ample divisor on $X$ and assume the value semigroup associated to $A$ with respect to the valuation induced by a complete flag is finitely generated. Then $X$ admits a flat degeneration to a toric variety whose normalization is  $X_{\Delta_{Y_{\bullet}}(A)}$. 
\end{theorem}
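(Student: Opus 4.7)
The plan is to realize the degeneration as the Proj of a Rees-type algebra built from the valuation $\nu=\nu_{Y_\bullet}$, following the strategy that goes back to Caldero and Alexeev--Brion and is adapted by Anderson. Write $R = R(X,A) = \bigoplus_{m\geq 0} H^0(X,mA)$ for the section ring, viewed as a subring of $\mathbb{C}(X)[t]$ by choosing a trivialization of $A$ at the flag point $Y_n$. The rank-$n$ valuation $\nu$ extends to $R$ by $\nu(\sum f_m t^m) = \min_m \nu(f_m)$, and its value set on the $m$th graded piece, shifted by $m$, is precisely the semigroup $\Gamma_{Y_\bullet}(A) \subseteq \mathbb{N}\times\mathbb{N}^n$ whose finite generation is hypothesized.

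Next I would collapse the rank-$n$ valuation to rank one in a way compatible with the grading: choose a linear functional $w:\mathbb{Z}^n\to\mathbb{Z}$ which is strictly positive on the finitely many generators of $S:=\Gamma_{Y_\bullet}(A)$ minus $0$, so that the induced $\mathbb{Z}$-valued filtration $F^{\leq \lambda}R_m := \{s\in H^0(mA)\colon \langle w,\nu(s)\rangle\leq \lambda\}\cup\{0\}$ has the same associated graded as $\nu$ itself. Such a $w$ exists because $S$ is finitely generated and sits in the positive orthant with respect to the lex ordering used to define $\nu$. Form the Rees algebra
\[
\widetilde R := \bigoplus_{\lambda\in\mathbb{Z}} t^{-\lambda} F^{\leq\lambda}R \;\subset\; R[t,t^{-1}],
\]
which is a graded $k[t]$-subalgebra of $R[t,t^{-1}]$. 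Set $\mathcal{X} := \operatorname{Proj}_{k[t]} \widetilde R \to \mathbb{A}^1 = \operatorname{Spec} k[t]$.

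The key properties to verify are: (i) $\widetilde R$ is flat over $k[t]$, (ii) $\widetilde R / (t-1) \cong R$, so the fiber over $1$ recovers $(X,A)$ in its embedding by $|mA|$ for $m\gg 0$, and (iii) $\widetilde R/(t)$ is isomorphic to the semigroup algebra $k[S]$, hence the fiber over $0$ is a (not necessarily normal) affine toric variety after Proj. Property (ii) is the tautological isomorphism $\widetilde R\otimes_{k[t]} k[t,t^{-1}] \cong R[t,t^{-1}]$. Property (iii) follows from the fact that $\widetilde R/(t)$ is the associated graded $\operatorname{gr}_{F}R = \operatorname{gr}_\nu R$ and the choice of flag identifies the latter with $k[S]$ via $s\mapsto \chi^{(m,\nu(s))}$ on the $m$th graded piece. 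For (i), the classical criterion is that $\widetilde R$ is torsion-free over $k[t]$; torsion-freeness is automatic because $\widetilde R\subset R[t,t^{-1}]$, and $k[t]$ is a PID, so flatness follows. This is the main technical point, and finite generation of $S$ enters here in a crucial way: it guarantees $\widetilde R$ is finitely generated over $k[t]$, so that $\mathcal{X}\to\mathbb{A}^1$ is of finite type and the special fiber has the expected dimension, making the flat family genuinely a degeneration of projective schemes with constant Hilbert function.

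Finally, to identify the normalization of $\mathcal{X}_0 = \operatorname{Proj} k[S]$ with the polarized toric variety $X_{\Delta_{Y_\bullet}(A)}$, I would invoke the standard fact that the normalization of the semigroup algebra $k[S]$ is $k[\overline S]$, where $\overline S$ is the saturation of $S$ in the cone $\operatorname{cone}(S)\subseteq\mathbb{R}\times\mathbb{R}^n$. By definition, the slice of $\operatorname{cone}(S)$ at height $1$ is the closure $\Delta_{Y_\bullet}(A)$, and the Proj of $k[\overline S]$ with its natural grading is exactly the projective toric variety $X_{\Delta_{Y_\bullet}(A)}$ associated with that polytope. I expect the subtle point to be the choice of $w$ and the verification that the Rees algebra is finitely generated, since a bad choice of one-parameter degeneration could fail to realize $\operatorname{gr}_\nu R$ on the special fiber; everything else is formal once $S$ is finitely generated.
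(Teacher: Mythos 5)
The paper does not prove this result; it is quoted verbatim from Anderson \cite[Theorem~5.8]{dave}, so there is no in-paper proof to compare against. Your argument is nevertheless the standard Rees-algebra (Caldero/Alexeev--Brion) degeneration, which is indeed the route Anderson takes, and most of it is sound: flatness of $\widetilde R$ over $k[t]$ via torsion-freeness over a PID, the identification of the generic fiber with $\operatorname{Proj} R = X$, the identification of the special fiber with $\operatorname{Proj} k[S]$, finite generation of $\widetilde R$ from finite generation of $S$, and the final normalization step via saturation of the semigroup in its cone, whose slice at height one is by definition $\Delta_{Y_\bullet}(A)$.

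The one genuine gap is the construction of the collapsing functional $w$. Requiring $w$ to be \emph{strictly positive on the generators of $S$} does not, on its own, give $\operatorname{gr}_F R \cong k[S]$, and it does not even guarantee that each $F^{\leq\lambda}R_m$ is a linear subspace of $R_m$: for a rank-$n$ valuation one has $\nu(s_1+s_2)\geq_{\mathrm{lex}}\min(\nu(s_1),\nu(s_2))$, and for this to force $\langle w,\nu(s_1+s_2)\rangle\geq\min(\langle w,\nu(s_1)\rangle,\langle w,\nu(s_2)\rangle)$ one needs $w$ to be \emph{order-preserving} for the lexicographic order on $\nu(R_m\setminus 0)=S_m$ (hence injective on each $S_m$), not merely positive. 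That is also what forces the associated graded to have one-dimensional graded pieces indexed by $S$, so that $\operatorname{gr}_F R$ really is the semigroup algebra $k[S]$ and not something bigger with the same Hilbert function. Producing such a $w$ from finite generation of $S$ is precisely the content of the auxiliary lemma Anderson proves (there are only finitely many lex-positive primitive difference directions to make $w$-positive, by finite generation of $S$); you flag this as ``the subtle point'' but then assert a sufficient condition that is too weak. Replacing ``strictly positive on generators'' with ``refines the lexicographic order on the cone spanned by $S$, i.e.\ $\langle w, a-a'\rangle>0$ whenever $(m,a),(m,a')\in S$ and $a>_{\mathrm{lex}}a'$'' and invoking finite generation to produce such a $w$ closes the gap; the rest of the proof then goes through as you wrote it.
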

Another natural question is the following.

\begin{question}\label{question degeneration}
For which varieties is it possible to find a flag such that the value semigroup of an ample divisor is finitely generated? \end{question}
Notice that the latter is a very strong condition, and it is much stronger than the finite generation of the divisorial ring.

We will give an affirmative answer to both Questions \ref{question global body} and \ref{question degeneration} for Mori dream spaces in Section \ref{MB}.

\subsection{Mori dream spaces}

Let $X$ be a $\mathbb{Q}$-factorial projective variety  of dimension $n$ over the complex numbers. Let $r$ be the rank of ${\rm Pic}_{\mathbb{Q}}(X)={\rm N}^1(X)$ and let $\{L_1, \ldots , L_r\}$ be a set of line bundles on $X$ whose numerical classes form a basis of ${\rm N}^1(X)$ and whose affine hull contains $\overline{{\rm NE}}^1(X)$.
\begin{definition} For any vector of integers $v=(n_1, \ldots, n_r)$, we set $L^v:=L_1^{\otimes n_1} \otimes \dots \otimes L_r^{\otimes n_r}$. The \emph{Cox ring} of $X$ is given by
$${\rm Cox}(X):= {\rm R}(X; L_1, \ldots, L_r)= \bigoplus_{v \in \mathbb{N}^r} H^0(X, L^v).$$

\end{definition}

\begin{definition}[\cite{HK}] Let $X$ be a $\mathbb{Q}$-factorial projective variety such that ${\rm Pic}_{\mathbb{Q}}(X)=N^1(X)$. Then we say that $X$ is a \emph{Mori Dream Space} (MDS) if and only if ${\rm Cox}(X)$ is a finitely generated $\mathbb{C}$-algebra.
\end{definition}

\begin{notation}
We denote by $N$ the dimension of ${\rm Cox}(X)$ over $\CC$ and we set $$m:=N-r,$$ where $r$ as above is the rank $N^1(X)$.
\end{notation}
\begin{notation}
A toric variety $Z$ is quasi-smooth, i.e. it is a finite abelian quotient of a smooth toric variety,  if the fan is given by rational simplicial cones.
\end{notation}
\begin{notation} We will denote by ${\rm Nef}(X)$ and ${\rm Mov}(X)$ the cones of \emph{nef} and \emph{movable} divisors of $X$ (see \cite{laz1} for precise definitions). Here we just recall that nef divisors are the ones intersecting positively all the effective curves and movable divisors are the ones with stable base locus of codimension at least $2$. 
\end{notation}

It is well known that that ${\rm Nef}(X)\subseteq {\rm Mov}(X)$; when $X$ is a MDS we know much more.

\begin{proposition}[{\cite[Proposition 2.11]{HK}}]\label{hu-keel embedding} Let $X$ be a Mori dream space. Then there is an embedding $X \subseteq Z$ into a quasi-smooth projective toric variety (with torus $T$) 
 such that:
\begin{enumerate}
\item the restriction ${\rm Pic}(Z)_{\mathbb{Q}} \to {\rm Pic }(X)_{\mathbb{Q}}$ is an isomorphism,
\item the previous isomorphism  induces an isomorphism $\overline{{\rm NE}}^1(Z) \to \overline{{\rm NE}}^1(X)$, 
\item every Mori chamber of $X$ is a union of finitely many Mori chambers of $Z$, 
\item for every rational contraction $\varphi: X \to X_0$ there is toric rational contraction $\varphi_T: Z \to Z_0$, that is regular at the generic point of $X$, such that $\varphi=\varphi_T|_X$.
\end{enumerate}
\end{proposition}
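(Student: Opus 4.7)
The plan is to follow the GIT construction of Hu--Keel \cite{HK}. Since $X$ is a MDS, its Cox ring is finitely generated; choose $\Pic(X)$-homogeneous generators $f_1,\dots,f_N \in \Cox(X)$. This provides a surjection $\mathbb{C}[x_1,\dots,x_N] \twoheadrightarrow \Cox(X)$, and hence a closed $T_{\Pic}$-equivariant embedding $\iota \colon \Spec(\Cox(X)) \hookrightarrow \mathbb{A}^N$, where $T_{\Pic} := \HHom(\Pic(X),\mathbb{G}_m)$ acts via the Pic-grading.

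Next, fix a generic ample class $\alpha \in \Pic(X)_\mathbb{Q}$ and form the GIT quotient of $\mathbb{A}^N$ by $T_{\Pic}$ with respect to $\alpha$. This produces a projective toric variety $Z$ with dense torus $T = \mathbb{G}_m^N/T_{\Pic}$; genericity of $\alpha$ ensures that the GIT fan is simplicial, so $Z$ is quasi-smooth. The analogous quotient of $\Spec(\Cox(X))$ recovers $X$, and equivariance of $\iota$ yields a closed embedding $j \colon X \hookrightarrow Z$. Conditions (1)--(4) can then be read off from variation of GIT (Dolgachev--Hu, Thaddeus). Characters of $T_{\Pic}$ tautologically produce line bundles on $Z$, giving $\Pic(Z)_\mathbb{Q} \cong \Pic(X)_\mathbb{Q}$, which is (1). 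Effective classes on $Z$ correspond to $T_{\Pic}$-invariant sections on $\mathbb{A}^N$, which by the Cox ring identification match $\Eff(X)$, yielding (2). The secondary fan of the $T_{\Pic}$-action subdivides $\Eff(Z)$ into what are precisely the Mori chambers of $Z$, while each Mori chamber of $X$ is a union of such chambers since linearizations within a fixed Mori chamber of $X$ give isomorphic SQMs of $X$; this is (3). Finally, a rational contraction $\varphi \colon X \dashrightarrow X_0$ is realized by moving $\alpha$ onto a wall of the Mori fan of $X$, and the corresponding wall-crossing in the VGIT fan produces $\varphi_T \colon Z \dashrightarrow Z_0$ restricting to $\varphi$ on $X$, which gives (4).

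The main obstacle is condition (3): one must show that \emph{every} SQM $X \dashrightarrow X'$ is actually visible inside the VGIT fan of $\mathbb{A}^N$, rather than merely in principle. This step crucially exploits the finite generation of $\Cox(X)$, which guarantees that the Mori fan of $X$ is a coarsening of the (finite) restriction to $\Eff(X)$ of the VGIT fan of $\mathbb{A}^N$ modulo $T_{\Pic}$; without finite generation the chambers need not assemble into a coherent subdivision. A secondary technical point is arranging for $Z$ to be simultaneously projective and quasi-smooth, which is handled by perturbing both the chosen system of Cox generators and the linearization $\alpha$ away from higher-codimension walls.
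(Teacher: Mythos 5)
The paper states this result as a direct citation to \cite[Proposition~2.11]{HK} and provides no proof of its own, so there is no in-paper argument to compare against; your sketch must be judged against the argument in Hu--Keel. On the whole you reproduce the outline of the Hu--Keel GIT construction correctly: present $\Cox(X)$ by generators to embed $\Spec\Cox(X)$ equivariantly in $\mathbb{A}^N$, take the GIT quotient by $T_{\Pic}:=\HHom(\Pic(X),\mathbb{G}_m)$ at a generic linearization $\alpha$, recover $X$ as the corresponding quotient of $\Spec\Cox(X)$ inside the toric quotient $Z$, and read off (1)--(4) from VGIT. Your emphasis that finite generation of $\Cox(X)$ is precisely what makes the Mori fan of $X$ a coarsening of the (finite) secondary fan is the right conceptual point.

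Two places where the sketch is looser than it should be. First, for (1) the proposition asserts that the \emph{restriction} map $\Pic(Z)_{\mathbb{Q}}\to\Pic(X)_{\mathbb{Q}}$ is an isomorphism; you instead construct a tautological map $\Pic(X)_{\mathbb{Q}}\to\Pic(Z)_{\mathbb{Q}}$ from characters of $T_{\Pic}$ and do not explain why this is inverse to restriction. The argument needs the identification of $\Cox(X)$-sections with $Z$-sections pulled back, which is where the specific choice of generating set and the geometric quotient property at generic $\alpha$ enter. Second, for (4) you discuss only SQMs realized as wall-crossings; the statement covers arbitrary rational contractions, including divisorial contractions and fibrations, which arise from moving $\alpha$ to non-maximal cones of the Mori fan (and correspondingly to non-generic linearizations of $\mathbb{A}^N$, where one only gets a good quotient). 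You gesture at this by ``moving $\alpha$ onto a wall,'' but the general case should be stated explicitly, as Hu--Keel do: every face of the Mori cone (equivalently, every GIT equivalence class in the secondary fan that meets $\Eff(X)$) gives a toric contraction of $Z$ restricting to the corresponding contraction of $X$. Neither of these is a fatal gap, but as written the sketch would not compile into a complete proof without filling them in.
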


Combining the previous proposition and the definition of MDS's, Hu and Keel \cite{HK} proved that both the nef cone and the movable cone of a MDS are rational polyhedral. Moreover, the movable cone is reconstructed as a finite union of cones, the so called \emph{Mori chamber decomposition} as follows. 
Let us denote by $\mathcal{F}:=\{f_i:X \dasharrow X_i\}$ 
 the set of finitely many SQM's of $X$, then we have
$${\rm Mov}(X)= \bigcup_{f_i \in \mathcal{F}} f^*_i({\rm Nef (X_i)}).$$

\subsection{GIT and Chow quotients for toric varieties} 

The main references for this section are \cite{KSZ,tevelev}.
Let us consider a projective toric variety $W \subseteq \mathbb{P}^N$ with torus $T$. Let us assume that the $T$-action on $W$ extends to $\mathbb{P}^N$. 
Moreover let $G$ be a subtorus of its defining torus $T$. 
In \cite{KSZ} Kapranov, Sturmfels and Zelevisky constructed a quotient of such variety called the \emph{Chow quotient}, denoted by $W\dslash G$. 
\begin{proposition}[{\cite[Section 3]{KSZ}}]\label{GIT} 
The toric variety $(T\dslash G, W\dslash G)$ dominates each GIT quotient of $W$ with respect to $G$ and it is the minimal normal variety with such property. 
\end{proposition}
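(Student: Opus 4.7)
The plan is to prove the two claims of the proposition separately, exploiting the explicit realization of $W\dslash G$ as the closure in the Chow variety $\operatorname{Chow}(W)$ of the image of the rational map $\psi\colon W\dashrightarrow \operatorname{Chow}(W)$, $x\mapsto \overline{G\cdot x}$, defined on the open subset $U\subseteq W$ of points with maximal-dimensional $G$-orbit. Since the $T$-action descends to $\operatorname{Chow}(W)$ and commutes with $\psi$ modulo the $G$-action, the closure of $\psi(U)$ inherits an action of $T\dslash G$ under which it becomes a toric variety, identifying the pair $(T\dslash G, W\dslash G)$.

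For the domination statement, fix a $G$-linearization $\chi$ on an ample bundle of $W$, yielding the GIT quotient $W\dslash_\chi G = W^{ss}(\chi)\dslash G$. The key point is that for $x$ in a dense open subset of $U$, the orbit closure $\overline{G\cdot x}$ meets $W^{ss}(\chi)$ and contains a unique closed $G$-orbit there, which determines a well-defined point of $W\dslash_\chi G$. This produces a rational map $W\dslash G\dashrightarrow W\dslash_\chi G$, and because $W\dslash G$ is projective and normal while $W\dslash_\chi G$ is separated and carries a compatible $T\dslash G$-equivariant structure, the valuative criterion extends it to a morphism. Equivalently, in toric language, the fan of $W\dslash G$ is the common refinement (secondary fan) of the fans of the various $W\dslash_\chi G$, and the morphism is the one induced by the refinement of fans.

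For minimality, suppose $Y$ is a normal variety with morphisms $\pi_\chi\colon Y\to W\dslash_\chi G$ for all linearizations $\chi$. The strategy is to assign to each $y\in Y$ the cycle in $W$ obtained by assembling the compatible preimages $\pi_\chi^{-1}(\pi_\chi(y))$ across all $\chi$; these are $G$-invariant, of the correct dimension, and for generic $y$ coincide with the closure of a single $G$-orbit. This defines, first rationally and then (by normality of $Y$) regularly, a map $Y\to\operatorname{Chow}(W)$ whose image lies in $W\dslash G$. At the toric level, this is the statement that a normal variety mapping to every coarsening of a fan factors through the common refinement.

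The main obstacle will be the minimality step: turning the set-theoretic assignment $y\mapsto\text{compatible cycle}$ into a genuine morphism of schemes. The cleanest way around this is to bypass cycles and argue purely combinatorially via the secondary/GKZ fan, identifying $W\dslash G$ as the toric variety corresponding to the common refinement of all GIT chamber fans; the minimality then reduces to the standard toric fact that a normal variety admitting equivariant morphisms to every coarsening of a fan admits one to the refinement itself.
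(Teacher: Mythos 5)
The paper itself does not prove this statement: it is quoted verbatim as \cite[Section 3]{KSZ}, so there is no in-text proof for your attempt to be compared against. What follows is therefore a comparison of your argument against the argument of Kapranov--Sturmfels--Zelevinsky, which is what the citation is implicitly invoking.

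Your concluding combinatorial paragraph is in fact the KSZ argument: in the toric setting the Chow quotient fan is the common refinement of the fans of all GIT quotients (equivalently, of the chamber fan from the GKZ/secondary decomposition), so both the domination morphisms and the minimality property reduce to the elementary facts that (i) a refinement of a fan induces a toric morphism to the coarsening, and (ii) any fan refining all of the $\Sigma_\chi$ simultaneously also refines their common refinement. Had you led with that, your proof would be essentially the paper's intended one and there would be nothing further to say.

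The cycle-theoretic argument you put first, however, has a genuine gap at the extension step. You assert that because $W\dslash G$ is projective and normal and $W\dslash_\chi G$ is separated, ``the valuative criterion extends it to a morphism.'' This is false as a general principle: projectivity and normality of the source plus separatedness (even properness) of the target do not allow one to extend a rational map to a morphism --- the reverse of a blow-up is the standard counterexample, and the valuative criterion only governs lifting of maps from spectra of DVRs, not elimination of indeterminacy. For an equivariant birational rational map of toric varieties over the same lattice, the map extends to a morphism \emph{if and only if} every cone of the source fan lies in some cone of the target fan, i.e., exactly the refinement statement that is the content of the proposition. So the ``valuative criterion'' sentence is circular: it quietly presupposes the conclusion. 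The same issue underlies your worry about the minimality step (gluing the compatible fibres into a morphism to $\operatorname{Chow}(W)$); you correctly flag it as an obstacle. The right fix is the one you yourself offer at the end --- discard the cycle-level surgery and prove the refinement of fans directly (KSZ do this by analyzing limits of torus-translated orbit cycles and matching them against GIT chambers) --- after which both domination and minimality follow purely from toric formalism.
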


\begin{notation} \label{chowquotient} The variety $(T\dslash G, W\dslash G)$ will appear as $\overline{Z}$ in the rest of the paper.
\end{notation}
\medskip

Let us now consider $Y$ a connected closed subvariety of the algebraic torus $T$. Let $\overline{Y}$ be a compactification of $Y$ in a toric variety $\calZ$ of $T$.

\begin{definition}[{\cite[Definition 1.1]{tevelev}}]\label{tropical compactification} We say that $\overline{Y}$ is a \emph{tropical compactification} if the multiplication map $$\Phi: T \times \overline{Y} \to \calZ, \quad (t,y)  \mapsto ty$$ 
is faithfully flat and $\overline{Y}$ is proper.
\end{definition}
\begin{notation}
The following notation is commonly used: the pair $(\overline{Y},\mathcal{Z})$, with $\overline{Y}$ and $\mathcal{Z}$ as in Definition \ref{tropical compactification}, is a tropical compactification of $Y\subseteq T$. 
\end{notation}

\begin{theorem} [{\cite[Theorem 1.2]{tevelev}}] Any subvariety $Y$ of a torus has a tropical compactification $\overline{Y}$ such that the corresponding toric variety $\calZ$ is smooth. 
In this case the boundary $\overline{Y}\backslash Y$ is divisorial and has \emph{combinatorial normal crossings}: for any collection
$B_1, \ldots , B_r \in \overline{Y}\backslash Y$ of irreducible divisors, $\bigcap B_i$ has codimension $r$. 
\end{theorem}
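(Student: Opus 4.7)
The plan is to realise the pair $(\overline{Y},\mathcal{Z})$ by starting from the tropical variety $\mathrm{Trop}(Y)\subset N_\mathbb{R}$, where $N$ denotes the cocharacter lattice of $T$, choosing a sufficiently fine smooth fan supported on it, and then taking the closure of $Y$ in the associated toric variety.

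First I would recall the basic structure of $\mathrm{Trop}(Y)$: it carries the structure of a rational polyhedral fan, pure of dimension $\dim Y$, whose support coincides with the set of weights $w\in N_\mathbb{R}$ for which the initial degeneration $\mathrm{in}_w(Y)$ is nonempty. Starting from the Gröbner fan of the ideal of $Y$, restricting to cones contained in $\mathrm{Trop}(Y)$, and then applying iterated star subdivisions (toric resolution of singularities applied to the resulting fan), I obtain a smooth fan $\Sigma$ in $N_\mathbb{R}$ with $|\Sigma|=\mathrm{Trop}(Y)$. Set $\mathcal{Z}:=X_\Sigma$ and let $\overline{Y}$ be the closure of $Y$ in $\mathcal{Z}$.

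Next I would verify the two conditions of Definition \ref{tropical compactification}. Properness of $\overline{Y}$ follows from the support condition $|\Sigma|=\mathrm{Trop}(Y)$: enlarging $\Sigma$ to a complete fan $\widetilde\Sigma$, the closure of $Y$ in $X_{\widetilde\Sigma}$ is disjoint from every torus orbit $O_\tau$ with $\tau\not\subset\mathrm{Trop}(Y)$ (otherwise the relevant initial ideal would contain a unit), so the closure is already contained in $\mathcal{Z}$ and hence compact. For faithful flatness of the multiplication map $\Phi:T\times\overline{Y}\to\mathcal{Z}$, I would argue by miracle flatness: $\mathcal{Z}$ is smooth by construction, $T\times\overline{Y}$ is Cohen--Macaulay, and each fibre of $\Phi$ over a point of the orbit $O_\sigma$ can be identified, up to a torus translation, with the initial degeneration $\mathrm{in}_\sigma(Y)$; since $\sigma\subset\mathrm{Trop}(Y)$, this degeneration is pure of the expected dimension $\dim Y$. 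Equidimensionality of fibres over a smooth target forces $\Phi$ to be flat, and surjectivity is automatic because every torus orbit of $\mathcal{Z}$ comes from a cone of $\Sigma$.

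For the combinatorial normal crossings property, I would use that every cone of codimension $r$ in a smooth fan has exactly $r$ ray generators, and that the corresponding boundary divisors $D_{\rho_1},\ldots,D_{\rho_r}$ in $\mathcal{Z}$ meet transversally along the orbit closure $\overline{O_\sigma}$ of codimension $r$. Each boundary divisor $B_i$ of $\overline{Y}$ is cut out as $\overline{Y}\cap D_{\rho_i}$, and the flatness of $\Phi$ just established guarantees that the scheme-theoretic intersection $B_1\cap\cdots\cap B_r=\overline{Y}\cap\overline{O_\sigma}$ is either empty or of the expected codimension $r$ in $\overline{Y}$, which is exactly the combinatorial normal crossings condition. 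The main obstacle is the flatness of the multiplication map, which demands a uniform control of the dimension and reducedness of every initial degeneration $\mathrm{in}_\sigma(Y)$ and an identification of these degenerations with the fibres of $\Phi$; in addition, ensuring transversal (not merely set-theoretic) intersections with the toric strata may require a further subdivision of $\Sigma$, which one must check preserves smoothness.
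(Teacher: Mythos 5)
The theorem you are asked about is cited in the paper from Tevelev \cite{tevelev}, with no proof reproduced, so the comparison must be against Tevelev's original argument. Your overall architecture matches his: build a fan supported on $\mathrm{Trop}(Y)$, refine it to a smooth fan, take the closure, and deduce divisoriality and combinatorial normal crossings from flatness of the multiplication map plus smoothness of $\mathcal{Z}$. The properness argument (cones outside $\mathrm{Trop}(Y)$ give trivial initial degenerations, hence the closure in a completion never leaves $\mathcal{Z}$) and the passage from flatness to the codimension count for $B_1\cap\cdots\cap B_r$ are both essentially the same as in \cite{tevelev}.

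There is, however, a genuine gap in the flatness step. You invoke miracle flatness, asserting that $T\times\overline{Y}$ is Cohen--Macaulay. This is not automatic: $Y$ is an \emph{arbitrary} closed subvariety of $T$, and even when $Y$ itself is nice, its closure $\overline{Y}$ in $\mathcal{Z}$ can acquire non-Cohen--Macaulay singularities along the boundary. Without the CM hypothesis, equidimensionality of fibres over a smooth base does not imply flatness, so your argument does not close. Tevelev avoids this entirely. His route is to produce \emph{some} tropical compactification first, by a flattening argument that does not presuppose any regularity of $\overline{Y}$: one takes a complete toric $\mathcal{P}\supset T$, closes $Y$ in it, considers the orbit map $T\to\mathrm{Hilb}(\mathcal{P})$, $t\mapsto t\cdot\overline{Y}$, and compactifies this map $T$-equivariantly; pulling back the universal family gives flatness by construction. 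Equivalently, one can start from the Gr\"obner fan (as you do), but then flatness over each affine chart $U_\sigma$ must be deduced from the flatness of the Rees/Gr\"obner degeneration $Y\rightsquigarrow\mathrm{in}_\sigma(Y)$, not from miracle flatness. Once one tropical compactification exists, Tevelev shows that any refinement of its fan remains a tropical compactification, and a smooth refinement yields the statement. You should also cite the Bieri--Groves structure theorem (or the flatness of the Gr\"obner degeneration) to justify that $\dim\mathrm{in}_\sigma(Y)=\dim Y$ for $\sigma\subset\mathrm{Trop}(Y)$, rather than treating it as evident. Finally, your closing worry about needing further subdivision for ``transversal'' intersections is misplaced: combinatorial normal crossings in this theorem is purely a dimension condition, and it follows directly once flatness is established, with no transversality to check.
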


\begin{proposition} [{\cite[Proposition 2.5]{tevelev}}] Assume that $(Y, \calZ)$ is a tropical compactification. Then the fan of $\calZ$ is supported on the tropicalization of $Y$, ${\rm Trop}(Y)$.

\end{proposition}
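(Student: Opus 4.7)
The plan is to prove the two set-theoretic inclusions ${\rm Trop}(Y) \subseteq |\Sigma|$ and $|\Sigma| \subseteq {\rm Trop}(Y)$ separately, where $\Sigma$ denotes the fan of $\calZ$ and $|\Sigma|$ is its support in the cocharacter space of $T$ over $\RR$. The first of these uses only properness. Given a rational point $v$ of ${\rm Trop}(Y)$, the Fundamental Theorem of Tropical Geometry produces an arc $\Spec \CC((t)) \to Y$ whose induced valuation on $\CC[T]$ restricts to $v$ on characters. Properness of $\overline{Y}$ combined with the valuative criterion extends this arc to a morphism $\Spec \CC[[t]] \to \overline{Y} \subseteq \calZ$, and the corresponding valuative statement for toric varieties says precisely that $v$ lies in some cone of $\Sigma$. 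Since ${\rm Trop}(Y)$ is the closure of its rational points and $|\Sigma|$ is closed, the inclusion follows.

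For the reverse inclusion I would exploit faithful flatness of $\Phi \colon T \times \overline{Y} \to \calZ$. Surjectivity of $\Phi$ immediately gives $\overline{Y} \cap O_\sigma \neq \emptyset$ for every cone $\sigma \in \Sigma$: writing a point $z \in O_\sigma$ as $z = t \cdot y$ with $y \in \overline{Y}$, the $T$-invariance of the orbit forces $y = t^{-1} z \in O_\sigma$. A standard intersection lemma (essentially Tevelev's) then guarantees that the relative interior of $\sigma$ meets ${\rm Trop}(Y)$. To upgrade this from \emph{meets} to \emph{is contained in}, I would use flatness of $\Phi$ together with $\dim T = \dim \calZ$: equidimensionality of fibers forces each fiber of $\Phi$ to have dimension $\dim \overline{Y}$, and parametrising the fiber of $\Phi$ over a point $z \in O_\sigma$ in terms of the stabiliser subtorus of $z$ in $T$ yields the expected formula $\dim(\overline{Y} \cap O_\sigma) = \dim Y - \dim \sigma$. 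This dimension equality, valid for every $\sigma \in \Sigma$, combined with the closedness of ${\rm Trop}(Y)$ as a polyhedral subset, forces $\sigma \subseteq {\rm Trop}(Y)$.

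The step I expect to be the main obstacle is the flatness-to-dimension computation: deducing the expected-dimension equality $\dim(\overline{Y} \cap O_\sigma) = \dim Y - \dim \sigma$ from faithful flatness of $\Phi$ requires careful bookkeeping of a fiber of $\Phi$ over a point of the non-open orbit $O_\sigma$, correctly accounting for the action of the stabiliser subtorus. Once this expected-dimension formula is in place, the remaining passage from \emph{the relative interior of $\sigma$ meets ${\rm Trop}(Y)$} to \emph{$\sigma \subseteq {\rm Trop}(Y)$} is a structural argument using that ${\rm Trop}(Y)$ carries a natural polyhedral fan structure whose cones of a given dimension correspond to strata of $\overline{Y}$ of the matching codimension.
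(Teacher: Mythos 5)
The paper itself does not prove this statement; it is imported verbatim from Tevelev (\cite[Proposition~2.5]{tevelev}), so there is no internal proof to compare against and the evaluation has to be on the merits of your argument alone. Your first inclusion, ${\rm Trop}(Y) \subseteq |\Sigma|$ from properness via the valuative criterion, is correct and essentially Tevelev's Proposition~2.3.

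The gap is in the reverse inclusion, and it is the gap you yourself flag. Surjectivity of $\Phi$ does give $\overline{Y}\cap O_\sigma\neq\emptyset$ for every $\sigma\in\Sigma$, and your fiber analysis using the stabilizer torus $T_\sigma$ does yield the expected-dimension formula $\dim(\overline{Y}\cap O_\sigma)=\dim Y-\dim\sigma$ from flatness. But the leap from these two facts to $\sigma\subseteq{\rm Trop}(Y)$ is not a formal consequence of closedness of ${\rm Trop}(Y)$. What you actually obtain from nonemptiness plus Tevelev's Lemma~2.2 is that $\sigma^\circ\cap{\rm Trop}(Y)\neq\emptyset$; and the dimension formula for the \emph{geometric} intersection $\overline{Y}\cap O_\sigma$ does not translate directly into a dimension statement about the \emph{tropical} intersection $\sigma^\circ\cap{\rm Trop}(Y)$ unless $\Sigma$ already refines a polyhedral structure on ${\rm Trop}(Y)$. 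Concretely, when $\sigma^\circ$ meets ${\rm Trop}(Y)$ in a proper lower-dimensional cone, $\overline{Y}\cap O_\sigma$ is typically nonempty of dimension $>\dim Y-\dim\sigma$ (one can check this on a hyperplane in $(\CC^\ast)^3$ with a $2$-cone that meets the tropical plane in a ray); flatness rules this out, but to \emph{exploit} that, one needs a comparison lemma you haven't proved, and even then one must still exclude $\sigma^\circ$ exiting ${\rm Trop}(Y)$ while still meeting it in full dimension. The way this is actually closed is to pass to a common refinement $\Sigma'$ of $\Sigma$ and a fan structure on ${\rm Trop}(Y)$, show the tropical compactification property persists under toric modifications refining the fan, and then apply surjectivity on $\Sigma'$: any cone $\sigma'$ with $\sigma'^\circ\cap{\rm Trop}(Y)=\emptyset$ would give $\overline{Y}'\cap O_{\sigma'}=\emptyset$, contradicting surjectivity, so every cone of $\Sigma'$ lies in ${\rm Trop}(Y)$ and hence $|\Sigma|=|\Sigma'|\subseteq{\rm Trop}(Y)$. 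That refinement-and-persistence step is the missing ingredient in your write-up.
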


\begin{remark}
Note that the variety $\overline{Z}$ in Notation \ref{chowquotient} is projective but might be singular, whereas $\calZ$ in Definition \ref{tropical compactification} is never projective, since the associated fan is never complete, but it is always smooth.

\end{remark}

The key observation of Tevelev in \cite[Section 5]{tevelev} is that the tropical compactification is a refinement of the Chow compactification, i.e. every compactification induced by GIT is dominated by the tropical compactification. Tevelev also observes  that the previous statement might fail if the torus sits trivially or in special position in the variety, but we will see that with a good choice of an auxiliary compactification of the torus, i.e. the one given by \cite{HK}, the statement is always true.

We are now ready to introduce the key object linking tropical geometry and MDS's.
\begin{notation}\label{notation subfan} Let $X$ be a normal algebraic variety and $W$ a normal algebraic toric variety with $X\subseteq W$.
Let $\Sigma_W$ be the fan defining the toric variety $W$. We will consider the following subfan of $\Sigma_W$
$$\Sigma_{W,X}:= \{\sigma \in \Sigma| {\rm V}(\sigma)\cap X \ne \emptyset \}.$$
\end{notation}

This subfan has already been considered in closely related works (cf. \cite{GM,tevelev}). In the next section we will construct a birational model of the toric variety $W$ viewed as the toric variety associated to a subfan of the Chow quotient. The idea was first introduced  by Lafforgue in \cite{laf} in the case of Grassmannians.

\section{Universal model via tropicalization}\label{section tropicalization}

Let $X$ be a Mori dream space and let $X\subseteq Z$ be an embedding of $X$  into a toric variety $Z$, as  in Proposition \ref{hu-keel embedding}. 
Via  ambient toric modifications 
it is possible to give a sequence of blow-ups in order to obtain an inclusion $\overline{X}\subseteq \overline{Z}$ such that for every small $\mathbb{Q}$-factorial modification of $X$, $f_i:X \dasharrow X_i$, we have the commutative diagram of Figure \ref{diagram},
where the $Z_i$ corresponds to the toric GIT quotient for the given SQM. Note that the diagram exists because MDS's admit finitely many SQM's.
A similar construction is studied in \cite{hausen}.

\begin{figure}[h!]
\begin{center}
\begin{tikzpicture}[line cap=round,line join=round,x=.7cm,y=.7cm]

\node (A) at (0,0) {$X$};

\node (B) at (2,0) {$X_i$};

\node (C) at (-1,0) {$Z$};

\node (D) at (3,0) {$Z_i$};

\node (E) at (1, 2) {$\overline{X}$};

\node (F) at (1,3) {$\overline{Z}$};

\node at (-.5,0) {$\supseteq$};
\node at (2.5,0) {$\subseteq$};
\node at (1, 2.5) {\rotatebox{90}{$\subseteq$}};

\draw[->] (E) -- (A); 
\draw[->] (E) -- (B); 
\draw[->] (F) [out=210, in=85, ->] to  (C); 
\draw[->] (F) [out=-30, in=95, ->] to (D); 
\draw[dashed,->] (A) -- (B);

\node at (1, -.3) {$_{f_i}$};

\node at (-.6,2) {$_{g}$};
\node at (2.8, 2) {$_{g_i}$};

\node at (.2,1) {$_{h}$};
\node at (1.8, 1) {$_{h_i}$};
\end{tikzpicture}
\end{center}
\caption{Universal commutative diagram}
\label{diagram}
\end{figure}
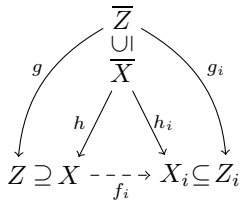

\begin{theorem} \label{main}
Let $X$ be a smooth Mori dream space with $\rank(N^1(X))=r$ and let $X \subseteq Z$ be the embedding of Proposition \ref{hu-keel embedding}, where $Z$ is a toric variety with maximal torus $T$. Set $X_T:=X|_T$, the restriction of $X$ to the maximal torus of $Z$. Then the following holds.
\begin{enumerate}
\item There exists a quasi-smooth toric variety $\overline{Z}$ and an embedded variety $\overline{X}\hookrightarrow \overline{Z}$ resolving all the small $\mathbb{Q}$-factorial modifications of $X$ induced by the Minimal Model Program.

\item Let $\Sigma_{\overline{Z}}$ be the fan associated to $\overline{Z}$ and let and $\Sigma_{\overline{Z}, \overline{X}}$  be the subfan of $\Sigma_{\overline{Z}}$ whose $T$-invariant orbits intersect $\overline{X}$, as in Notation \ref{notation subfan}.
Then $\Sigma_{\overline{Z}, \overline{X}}$ is supported on ${\rm Trop}(X_T)$.
\end{enumerate}
\end{theorem}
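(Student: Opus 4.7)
The plan is to construct $\overline{Z}$ as a Chow-type refinement of $Z$ that simultaneously resolves every toric SQM, and then to identify the subfan $\Sigma_{\overline{Z},\overline{X}}$ with the fan of a tropical compactification of $X_T$ via Tevelev's theory.

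For part (1), by Proposition \ref{hu-keel embedding}(4), each SQM $f_i\colon X\dashrightarrow X_i$ is the restriction of a toric rational contraction $Z\dashrightarrow Z_i$, so the finitely many SQM's of $X$ produce finitely many birational toric models $Z_i$. Realising $Z$ as a GIT quotient $W/\!/G$ of its Cox-theoretic total space $W$ by a subtorus $G$ of the defining torus of $W$, each $Z_i$ arises as a GIT quotient of $W$ with respect to a different character. The Kapranov--Sturmfels--Zelevinsky Chow quotient $\overline{Z}:=W\dslash G$ (Proposition \ref{GIT}) is a projective normal toric variety that dominates every $Z_i$ through a toric birational morphism; quasi-smoothness is obtained by passing, if necessary, to a simplicial common refinement of the fans of the $Z_i$. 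Setting $\overline{X}\subseteq \overline{Z}$ to be the strict transform of $X$ assembles the commutative diagram of Figure \ref{diagram}, since every $X_i$ is recovered by composing $\overline{X}\to \overline{Z}\dashrightarrow Z_i$ with the Hu--Keel embedding $X_i\subseteq Z_i$.

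For part (2), let $X_T:=X\cap T$ and consider $\overline{X}\subseteq \overline{Z}$. The subfan $\Sigma_{\overline{Z},\overline{X}}$ of Notation \ref{notation subfan} corresponds to the (not necessarily projective) open toric subvariety $\mathcal{Z}\subseteq \overline{Z}$ whose toric strata all meet $\overline{X}$. I would verify that the pair $(\overline{X}\cap \mathcal{Z},\mathcal{Z})$ is a tropical compactification of $X_T$ in the sense of Definition \ref{tropical compactification}: by construction every boundary stratum of $\mathcal{Z}$ meets $\overline{X}\cap\mathcal{Z}$, and the faithful flatness of the multiplication map is forced by the simplicial refinement of the Chow quotient, via Tevelev's observation that every Chow compactification is dominated by a tropical compactification. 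Once tropicality is established, the Tevelev proposition cited directly after Definition \ref{tropical compactification} identifies the support of the fan of $\mathcal{Z}$, which is precisely $\Sigma_{\overline{Z},\overline{X}}$, with ${\rm Trop}(X_T)$.

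The main obstacle is part (2): one must verify both inclusions of supports. That every cone of $\Sigma_{\overline{Z},\overline{X}}$ lies in ${\rm Trop}(X_T)$ follows from the orbit-closure criterion, since a cone $\sigma$ whose orbit meets $\overline{X}$ forces $X_T$ to limit into that stratum, which is the defining property of a point of the tropical locus. The reverse inclusion, that every point of ${\rm Trop}(X_T)$ is covered by a cone in the subfan, requires that the Chow refinement is fine enough to realise every tropical cone as an orbit cone; this is where Tevelev's refinement argument, together with the hypothesis that the embedding $X\subseteq Z$ comes from Proposition \ref{hu-keel embedding} (so that the torus sits nontrivially and the coordinate subspaces cut $X$ properly), enters decisively and constitutes the subtle content of the proof.
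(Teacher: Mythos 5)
Your approach to part (1) is essentially the paper's, reorganised: you go directly to the Chow quotient of \cite{KSZ} and then refine for quasi-smoothness, whereas the paper constructs $\overline{Z}$ by a finite sequence of star subdivisions of $\Sigma_Z$ resolving the toric SQM's (using Proposition~\ref{hu-keel embedding}(3)) and then identifies the result as dominating all GIT quotients, hence as the Chow quotient. Either way one obtains the commutative diagram of Figure~\ref{diagram}, so part (1) is fine.

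Part (2) has a genuine gap. You assert that faithful flatness ``is forced by the simplicial refinement of the Chow quotient, via Tevelev's observation that every Chow compactification is dominated by a tropical compactification,'' but this observation runs in the wrong direction: it tells you a tropical compactification refines the Chow quotient in general, \emph{not} that the Chow quotient (or an arbitrary simplicial refinement of it) is already a tropical compactification. The paper is explicit about this: after noting properness is automatic, it invokes Tevelev's criterion that a proper compactification is tropical iff the boundary has \emph{combinatorial normal crossings}, observes that this fails in general for $\overline{X}$ in the Chow quotient, and then shows one can force it by further star subdivisions of the ambient fan --- these extra subdivisions are precisely what defines $\overline{Z}$. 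Your proposal never performs this step, and without it tropicality (hence Tevelev's Proposition 2.5) is not available.

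Your fallback ``two inclusions of supports'' framing does not repair this. The orbit-closure criterion (Tevelev's Lemma 2.2) only gives that a cone $\sigma$ with $V(\sigma)\cap\overline{X}\neq\emptyset$ has \emph{relative interior} meeting ${\rm Trop}(X_T)$; it does not give $\sigma\subseteq{\rm Trop}(X_T)$. Cones of $\Sigma_{\overline{Z},\overline{X}}$ can stick out of the tropical variety unless the fan refines a polyhedral structure on ${\rm Trop}(X_T)$, and that refinement is exactly what the combinatorial-normal-crossings star subdivisions buy you. Similarly, the reverse containment (${\rm Trop}(X_T)\subseteq|\Sigma_{\overline{Z},\overline{X}}|$) is the easy direction, following from properness of $\overline{X}$, not the subtle one. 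So the actual subtle content you should be supplying is the combinatorial normal crossings step, which is also what Lemma~\ref{flag} later relies on.
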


\begin{proof} Since MDS's admit only a finite number of SQM's, it is always possible to construct a variety resolving all such morphisms. It is well known that the Mori chambers of $X$ are refined by the Mori chambers of $Z$, cfr. Proposition \ref{hu-keel embedding} (3). This implies  that all of the SQM's are induced by a sequence of SQM's at the level of the ambient toric variety. 
Therefore  $\overline{Z}$ is obtained as a finite sequence of star subdivisions corresponding to the resolution of all the SQM's. This  proves $(1)$. 

By the previous argument we know that there exists a normal toric variety $\overline{Z}$ that  dominates all possible GIT quotients associated to the ${\rm Cox}$ ring of $X$. 
Then $\overline{Z}$ must  be the Chow quotient of Proposition \ref{GIT}.

Let $\mathbb{T}_{\overline{Z}, \overline{X}}$ denote the toric variety associated to $\Sigma_{\overline{Z}, \overline{X}}$.
We claim that  $(\overline{X},\mathbb{T}_{\overline{Z}, \overline{X}})$ 
is a tropical compactification of $X_T \subseteq T$. Since the inclusion is obviously proper, as it is constructed from an embedding via a sequence of blow-ups, the pair is a tropical compactification if and only if the boundary divisors have combinatorial normal crossings (cfr. \cite[Proof of Theorem 1.2]{tevelev}).

Note that proposition \ref{hu-keel embedding} implies that the intersection has the correct dimension for divisors.  That these restrictions, when non-empty, are  with combinatorial normal crossings it is not true a priori.
 However the desired property  can be achieved via a sequence of star subdivisions of the defining fan of the ambient toric variety; these star subdivisions giving rise to $\overline{Z}$. This concludes the proof of $(2)$. 
\end{proof}

\begin{remark}
Note that in the proof of Theorem \ref{main} we implicitly showed that the tropicalization itself detects what are the star subdivisions to be performed in order to solve such issue: these correspond to the blow-up of the loci with the ``incorrect'' intersection dimension. In particular we confirmed that the Chow quotient is dominated by the tropicalization as stated in \cite[Section 5]{tevelev}.
\end{remark}

\begin{corollary}\label{tildeZ} 
There exists a smooth toric resolution $\widetilde{Z}$ of $\overline{Z}$ such that $\overline{X}$ embeds in $\widetilde{Z}$ and such that (1) and (2) of Theorem \ref{main} hold for the inclusion $\overline{X}\hookrightarrow\widetilde{Z}$.
\end{corollary}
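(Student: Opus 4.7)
The plan is to construct $\widetilde{Z}$ as a smooth toric refinement of $\overline{Z}$ whose fan has the same support as $\Sigma_{\overline{Z}}$, and to upgrade the embedded model to this new ambient space by taking iterated strict transforms.

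First I would invoke the classical toric resolution of singularities (Kempf--Knudsen--Mumford--Saint-Donat): since $\overline{Z}$ is quasi-smooth by Theorem \ref{main}, the fan $\Sigma_{\overline{Z}}$ is simplicial, and any simplicial fan admits a refinement to a smooth fan by a finite sequence of star subdivisions that preserves the support. This produces a smooth toric variety $\widetilde{Z}$ together with a proper birational toric morphism $p : \widetilde{Z} \to \overline{Z}$ whose fan $\Sigma_{\widetilde{Z}}$ has the same support as $\Sigma_{\overline{Z}}$.

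Next, since each star subdivision corresponds to an ambient toric blow-up along a torus-invariant center, I would replace the previously constructed $\overline{X}$ by its iterated strict transform through this sequence, continuing to denote the result by $\overline{X}$. These are further blow-ups of a model that already dominates every SQM of $X$, so the new embedding $\overline{X} \hookrightarrow \widetilde{Z}$ continues to dominate all such modifications and lifts the diagram of Figure \ref{diagram} with $\widetilde{Z}$ in place of $\overline{Z}$; this proves $(1)$. For $(2)$, each cone in $\Sigma_{\widetilde{Z}, \overline{X}}$ is contained in a cone of $\Sigma_{\overline{Z}, \overline{X}}$, and since $p$ is proper and birational the two subfans have the same support, namely ${\rm Trop}(X_T)$ by Theorem \ref{main}$(2)$.

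The main obstacle I anticipate is ensuring that the iterated strict transform of $\overline{X}$ remains a tropical compactification with combinatorial normal crossings after the resolution. This is precisely the subtlety already handled in the proof of Theorem \ref{main}$(2)$: if some star subdivision produces a stratum whose intersection with $\overline{X}$ has the wrong codimension, one performs additional star subdivisions along such loci, dictated by ${\rm Trop}(X_T)$, restoring the combinatorial normal crossing property without altering the support of the fan. Interleaving these corrective subdivisions with the smoothness-producing ones yields a fan $\Sigma_{\widetilde{Z}}$ that is simultaneously smooth, supported on ${\rm Trop}(X_T)$, and compatible with the tropical compactification $\overline{X}$, as required.
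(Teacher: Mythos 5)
Your proposal takes a genuinely different route from the paper and, as written, has a real gap. The paper's proof of this corollary rests on a single observation: the subfan $\Sigma_{\overline{Z},\overline{X}}$ (equivalently, the toric variety $\mathbb{T}_{\overline{Z},\overline{X}}$ of the tropical compactification constructed in Theorem \ref{main}) is \emph{already smooth}. Because of this, one can choose a smooth refinement $\Sigma_{\widetilde{Z}}\to\Sigma_{\overline{Z}}$ in which \emph{all} subdivisions occur in cones lying outside $\Sigma_{\overline{Z},\overline{X}}$. The corresponding blow-up centers are torus-invariant strata with $V(\sigma)\cap\overline{X}=\emptyset$, so the strict transform of $\overline{X}$ is an isomorphism and the induced map of subfans $\Sigma_{\widetilde{Z},\overline{X}}\to\Sigma_{\overline{Z},\overline{X}}$ is literally the identity. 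Nothing ever needs to be corrected.

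Your argument instead resolves $\overline{Z}$ by generic toric desingularization, takes iterated strict transforms of $\overline{X}$, and then ``interleaves corrective subdivisions'' to try to restore the combinatorial normal crossing property. This has two problems. First, when you write ``I would replace the previously constructed $\overline{X}$ by its iterated strict transform\dots continuing to denote the result by $\overline{X}$,'' you are changing the model: if a subdivision ray meets a cone of $\Sigma_{\overline{Z},\overline{X}}$, the blow-up center intersects $\overline{X}$ and the strict transform is a genuine further blow-up, \emph{not} $\overline{X}$ itself. The corollary asserts that the \emph{same} $\overline{X}$ from Theorem \ref{main} embeds in $\widetilde{Z}$; this identification is used later (see the remark after Lemma \ref{flag}, which needs $\Sigma_{\widetilde{Z},\overline{X}}\to\Sigma_{\overline{Z},\overline{X}}$ to be the identity in order to transport flags). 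Second, the ``interleaving'' step is asserted rather than argued: you would need both a termination argument and a proof that the corrective subdivisions do not destroy the smoothness just achieved, which risks an infinite loop. The observation that $\mathbb{T}_{\overline{Z},\overline{X}}$ is smooth is precisely what dissolves both difficulties, and it is the missing ingredient in your proof.
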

\begin{proof}
Since $\mathbb{T}_{\overline{Z},\overline{X}}$ is smooth, it is possible to choose a smooth refinement of the fan $\Sigma_{\overline{Z}}$ of $\overline{Z}$, $\pi:\Sigma_{\widetilde{Z}}\to\Sigma_{\overline{Z}}$ such that the resulting morphism $\widetilde{X}\to\overline{X}$ is an isomorphism and the induced morphism of fans $\Sigma_{\widetilde{Z},\overline{X}}\to\Sigma_{\overline{Z},\overline{X}}$ is the identity. 
\end{proof}

Using this construction we can recover the movable cone of $X$. 
\begin{proposition} \label{cones}
With the above notation, we have that $h_*{\rm Nef}(\overline{X})= {\rm Mov}(X)$.
\end{proposition}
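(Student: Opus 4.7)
The plan is to prove the two inclusions of the equality $h_*{\rm Nef}(\overline{X})={\rm Mov}(X)$ separately, relying on the Hu-Keel Mori chamber decomposition ${\rm Mov}(X)=\bigcup_i f_i^*{\rm Nef}(X_i)$ recalled earlier and on the commutativity $h_i=f_i\circ h$ of the diagram in Figure \ref{diagram}.

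For the inclusion ${\rm Mov}(X)\subseteq h_*{\rm Nef}(\overline{X})$: fix an SQM $f_i$ and a class $D_i\in{\rm Nef}(X_i)$. Since $h_i\colon\overline{X}\to X_i$ is a regular morphism (a feature built into Theorem \ref{main}), $h_i^*D_i$ is a nef class on $\overline{X}$. As the small modification $f_i$ is an isomorphism in codimension one, the pullback $f_i^*D_i$ is well-defined as a divisor class on $X$ and the rational-map identity $h_i=f_i\circ h$ gives $h_i^*D_i=h^*(f_i^*D_i)$. Applying $h_*$ and using that $h_*h^*={\rm id}$ on ${\rm N}^1$ for the birational morphism $h$, one obtains $h_*(h_i^*D_i)=f_i^*D_i$. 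Hence $f_i^*{\rm Nef}(X_i)\subseteq h_*{\rm Nef}(\overline{X})$ for every $i$, and taking the union over $i$ yields the inclusion.

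For the reverse inclusion $h_*{\rm Nef}(\overline{X})\subseteq{\rm Mov}(X)$: since ample classes are movable and the movable cone is closed, ${\rm Nef}(\overline{X})\subseteq{\rm Mov}(\overline{X})$, so it suffices to prove $h_*{\rm Mov}(\overline{X})\subseteq{\rm Mov}(X)$. Let $\overline{M}$ be movable on $\overline{X}$; for some $m\gg 0$ the linear system $|m\overline{M}|$ has stable base locus of codimension at least two. By Theorem \ref{main}, $h$ is a composition of blow-ups induced by toric star subdivisions of $Z$, whose centres have codimension $\geq 2$ in the respective ambient varieties; consequently, the image $h({\rm Exc}(h))\subseteq X$ has codimension $\geq 2$. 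The reflexive sheaves $h_*\mathcal{O}_{\overline{X}}(m\overline{M})$ and $\mathcal{O}_X(mh_*\overline{M})$, which manifestly agree outside $h({\rm Exc}(h))$, must therefore coincide. Thus sections of $|m\overline{M}|$ correspond bijectively to sections of $|m h_*\overline{M}|$, and the base locus of the latter is contained in the union of the image of the former base locus with $h({\rm Exc}(h))$ --- both of codimension $\geq 2$ in $X$. Hence $h_*\overline{M}\in{\rm Mov}(X)$.

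The main obstacle lies in the second inclusion, specifically in showing that pushforward under $h$ preserves movability. This step relies on the codimension-$\geq 2$ nature of the blow-up centres, a feature guaranteed by the tropical construction of $\overline{Z}$; without it, sections of a movable linear system on $\overline{X}$ could acquire new fixed divisorial components on $X$. Once this technical point is settled, combining the two inclusions gives the desired equality $h_*{\rm Nef}(\overline{X})={\rm Mov}(X)$.
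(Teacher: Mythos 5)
Your proof follows the same two-inclusion architecture as the paper. For $ {\rm Mov}(X)\subseteq h_*{\rm Nef}(\overline{X})$ you use the Hu--Keel chamber decomposition ${\rm Mov}(X)=\bigcup_i f_i^*{\rm Nef}(X_i)$ exactly as the paper does. For the reverse inclusion you give a direct base-locus argument, whereas the paper simply cites the general fact that push-forward along a birational morphism carries the movable cone into the movable cone (\cite[Corollary 3.12]{FL}); both routes are valid, but the citation is cleaner and does not require the special ``blow-ups along codimension $\geq 2$ centres'' structure of $h$.

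One technical slip in the first inclusion: the identity $h_i^*D_i = h^*(f_i^*D_i)$ in ${\rm N}^1(\overline{X})$ is not correct as stated. Since $f_i$ is only a small $\mathbb{Q}$-factorial modification, the class $f_i^*D_i$ is \emph{defined} as $h_*(h_i^*D_i)$, and $h^*h_*$ is not the identity: it can differ by $h$-exceptional classes, so $h^*(f_i^*D_i)$ and $h_i^*D_i$ need not coincide. Fortunately you never need this equality --- applying $h_*$ to $h_i^*D_i\in{\rm Nef}(\overline{X})$ and using $h_*h^*={\rm id}$ (valid because $h_*\OO_{\overline{X}}=\OO_X$) directly gives $h_*(h_i^*D_i)=f_i^*D_i$, which is all the chamber decomposition requires. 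Similarly, in the second inclusion the reflexivity of $h_*\OO_{\overline{X}}(m\overline{M})$ is unnecessary (and not obviously justified): it suffices to note that restriction across the open set where $h$ is an isomorphism identifies $H^0(\overline{X},m\overline{M})$ with a subspace of $H^0(X,mh_*\overline{M})$ whose base locus, together with $h({\rm Exc}(h))$, has codimension $\geq 2$; this already forces the base locus of the full system to have codimension $\geq 2$.
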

\begin{proof}
First of all we have that for every  SQM of $X$, $f_i:X \dasharrow X_i$, then $h_i^*{\rm Nef}(X_i)\subseteq {\rm Nef}(\overline{X})$. In particular, since $X$ is a MDS and $\overline{X}$ dominates all the SMQ's, this implies that ${\rm Mov}(X)\subseteq h_*{\rm Nef}(\overline{X})$.

For the  converse inclusion, recall that  the push-forward of the movable cone via any birational morphism is contained in the movable cone of the image, see for instance \cite[Corollary 3.12]{FL}. Therefore we have the following inclusions of cones: $h_*{\rm Nef}(\overline{X})\subseteq h_*{\rm Mov}(\overline{X})\subseteq {\rm Mov}({X})$.
\end{proof}

\section{Movable cones of Mori Dream Spaces} \label{MB}

The idea behind the notion of Newton-Okounkov bodies is that of associating polyhedral objects to line bundles on a given variety. It is then natural to try to understand the correlation between properties of polytopes and their algebro-geometric counterpart.

Let us recall the definition of Minkowski basis.

\begin{definition}[\cite{PSU}]
	Let $X$ be a smooth projective variety of dimension $n$ and 
	$Y_{\bullet} : X = Y_{0} \supseteq Y_{1} \supseteq \dots \supseteq 
	Y_{n-1} \supseteq Y_{n} = \{pt\}$ an admissible flag on $X$.
	A collection $\left\{D_1,\dots,D_r\right\}$ of pseudo-effective divisors 
	on $X$ is called \emph{Minkowski basis} of $X$ with respect to $Y_\bullet$ if
	\begin{enumerate}
		\item
			for any pseudo-effective divisor $D$ on $X$ there exist non-negative numbers 
			$\left\{a_1,\dots,a_r\right\}$ and a translation $\phi:\mathbb{R}^n\to \mathbb{R}^n$ such that 
			$$
				\phi(\Delta_{Y_\bullet}(D))=\sum a_i\Delta_{Y_\bullet}(D_i),
			$$
			\item
				the Newton-Okounkov bodies $\Delta_{Y_\bullet}(D_i)$ are indecomposable, 
				i.e., if $\Delta_{Y_\bullet}(D_i) = P_1 + P_2$ for convex bodies $P_1,P_2$, 
				then $P_j=k_j\cdot\Delta_{Y_\bullet}(D_i)$ for non-negative numbers
				$k_1,k_2$ with $k_1+k_2=1$.
	\end{enumerate} 
\end{definition}

The following is the key result linking tropical geometry and Newton-Okounkov bodies.

\begin{lemma} \label{flag}
Let $Y^Z_{\bullet} : \overline{Z} = Y^Z_{0} \supset Y^Z_{1} \supset \dots \supset 
	Y^Z_{m-1} \supset Y^Z_{m} $ be an admissible flag of torus-invariant subvarieties  of $\overline{Z}$ of dimension $\dim(Y^Z_{i})=m-i$, for $i=0,\dots,m$. Then $Y^Z_{\bullet}$ induces a flag on $\overline{X}$ via truncation to the first $n$ elements, restriction of such element to $\overline{X}$ and the choice of an irreducible element of the intersection: 
$$Y_{\bullet} : \overline{X}=Y_0\supset Y_1\supset \cdots\supset Y_n, \ \textrm{ where } Y_i\subseteq Y^Z_{m-n+i}|_{\overline{X}}$$
 of dimension $\dim(Y_i)=n-i$, for $i=0, \ldots, n$, is an admissible flag on $\overline{X}$. 
\end{lemma}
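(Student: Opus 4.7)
The plan is to realise the torus-invariant admissible flag $Y^Z_\bullet$ on $\overline{Z}$ as a maximal chain of cones in the fan $\Sigma_{\overline{Z}}$, and to exploit Theorem \ref{main}(2), which says that the subfan $\Sigma_{\overline{Z}, \overline{X}}$ is supported on the pure $n$-dimensional polyhedral complex ${\rm Trop}(X_T)$. This supportedness is precisely the geometric input governing how the members of the flag meet $\overline{X}$.

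First I would write $Y^Z_j = V(\tau_j)$ for a maximal chain of cones $\tau_0 = \{0\} \subsetneq \tau_1 \subsetneq \cdots \subsetneq \tau_m$ in $\Sigma_{\overline{Z}}$ with $\dim \tau_j = j$. By construction $V(\tau_j) \cap \overline{X} \neq \emptyset$ exactly when $\tau_j \in \Sigma_{\overline{Z}, \overline{X}}$, and since the subfan has pure dimension $n$ this can only happen for $j \leq n$. I would choose the chain so that the cones $\tau_0, \tau_1, \ldots, \tau_n$ all lie in $\Sigma_{\overline{Z}, \overline{X}}$, which is possible because any maximal cone of ${\rm Trop}(X_T)$ has dimension $n$ and its flag of faces extends to a maximal chain in $\Sigma_{\overline{Z}}$.

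Second, I would invoke the tropical compactification property of the pair $(\overline{X}, \mathbb{T}_{\overline{Z}, \overline{X}})$ established in the proof of Theorem \ref{main}. For every $\tau \in \Sigma_{\overline{Z}, \overline{X}}$ of dimension $d \leq n$, the intersection $V(\tau) \cap \overline{X}$ is a pure codimension-$d$ subvariety of $\overline{X}$: this is exactly the combinatorial normal crossings property of the boundary $\overline{X} \setminus X_T$, which guarantees that intersections of boundary strata have the expected codimension. Applying this to $\tau_j$ for $j = 0, 1, \ldots, n$ and picking irreducible components $Y_j \subseteq V(\tau_j) \cap \overline{X}$ of dimension $n - j$ in a compatible nested way produces a chain $\overline{X} = Y_0 \supset Y_1 \supset \cdots \supset Y_n$ of the prescribed dimensions.

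Third, I would verify the admissibility condition that each $Y_i$ is smooth at the point $Y_n$. Passing to the smooth toric refinement $\widetilde{Z}$ of $\overline{Z}$ given by Corollary \ref{tildeZ}, one may assume the ambient toric variety is smooth; in an \'etale neighbourhood of the torus-fixed point corresponding to $\tau_n$, the tropical compactification structure forces $\overline{X}$ to meet the coordinate strata of $\widetilde{Z}$ transversally, so each $Y_i$, cut out on $\overline{X}$ by the coordinate subspace attached to $\tau_i$, is smooth at the chosen point. The main obstacle lies precisely here: combinatorial normal crossings alone yields only the correct codimensions, whereas admissibility demands smoothness at $Y_n$. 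This upgrade is secured by working on $\widetilde{Z}$ and, if necessary, further subdividing the maximal cone of the tropicalization so that it becomes regular, thereby producing an \'etale-local product structure on $\overline{X}$ around $Y_n$.
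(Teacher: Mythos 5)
Your proposal takes essentially the same route as the paper's proof: both invoke the tropical compactification structure of $(\overline{X}, \mathbb{T}_{\overline{Z},\overline{X}})$ from Theorem \ref{main}, use the combinatorial normal crossings property to guarantee that the restrictions $Y^Z_i|_{\overline{X}}$ have the expected codimension, and then pass to irreducible components. The paper's proof is in fact considerably terser than yours; it stops after the two sentences corresponding to your steps two and three, and it defers the question of where $\overline{Z}$ might be singular (so that a torus-invariant admissible flag need not exist at all) to a remark immediately following the lemma, which invokes the smooth refinement $\widetilde{Z}$ from Corollary \ref{tildeZ}. You absorb that remark into the proof itself, which is cleaner and makes the admissibility condition (smoothness of each $Y_i$ at $Y_n$) explicit rather than implicit. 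You also make two points that the paper leaves tacit and that are genuinely needed: that the maximal chain of cones realising the flag must be chosen so that $\tau_0, \dots, \tau_n$ all lie in the $n$-dimensional subfan $\Sigma_{\overline{Z},\overline{X}}$ (which is the real content of the word ``truncation'' in the statement), and that the irreducible components must be selected in a nested fashion. These are not different ideas, just places where you have filled in what the paper takes for granted.
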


\begin{proof}
By Theorem \ref{main}, $\overline{X}\subseteq \mathbb{T}_{\overline{Z},\overline{X}}$ 
is a tropical compactification, hence the intersection with $T$-invariant divisors is combinatorially normal crossings, i.e. the intersection with the torus-invariant flag elements has always the correct codimension. If the flag obtained in this way is not given by irreducible elements, we can derive an admissible one by choosing irreducible components of the restrictions $Y^Z_{m-n+i}|_{\overline{X}}$. This proves the statement.
\end{proof}

\begin{remark}
Notice that if $\overline{Z}$ is not smooth, an admissible flag might not exist. In this case, let us consider the resolution $\widetilde{Z}\to\overline{Z}$ constructed in Corollary \ref{tildeZ}. Since we have the identity $\Sigma_{\widetilde{Z},\overline{X}}\to\Sigma_{\overline{Z},\overline{X}}$, it is always possible, modulo reordering the terms, to obtain the desired flag starting from an admissible flag on $\widetilde{X}$.
\end{remark}

\begin{remark} The flags constructed in Lemma \ref{flag} are supported on $\overline{X}$. Those are infinitesimal flags for $X$, and all the properties of Newton-Okounkov bodies are preserved.
\end{remark}

\begin{theorem} \label{base}
In the notation of Section \ref{section tropicalization}, set $j: \overline{X} \hookrightarrow \overline{Z}$. Let $\{D^Z_i\}_{i\in I}$ be a Minkowski basis for the nef cone of $\overline{Z}$. Then $$ \mathcal{D}:=\{D_i:=h_*j^*D^Z_i\}_{i\in I}$$ is a Minkowski basis for $X$ with respect to the flag constructed in Lemma \ref{flag}.
\end{theorem}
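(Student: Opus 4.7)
The plan is to reduce the two defining properties of a Minkowski basis on $X$ to their toric analogues on $\overline{Z}$, where Newton--Okounkov bodies with respect to $T$-invariant flags coincide with the standard lattice polytopes and both Minkowski additivity and indecomposability are essentially tautological. The transfer mechanism rests on three inputs already established in the paper: the tropical compactification structure $(\overline{X}, \mathbb{T}_{\overline{Z}, \overline{X}})$ from Theorem~\ref{main}, the Picard group identification $\mathrm{Pic}_{\mathbb{Q}}(\overline{Z}) \simeq \mathrm{Pic}_{\mathbb{Q}}(\overline{X})$ inherited through Proposition~\ref{hu-keel embedding}(1), and the equality $h_*\mathrm{Nef}(\overline{X}) = \mathrm{Mov}(X)$ of Proposition~\ref{cones}.

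The first step I would carry out is the key identification: for every nef divisor $D^Z$ on $\overline{Z}$, the Newton--Okounkov body $\Delta_{Y_\bullet}(j^*D^Z)$ on $\overline{X}$ is naturally identified with the slice of the toric polytope $\Delta_{Y^Z_\bullet}(D^Z)$ cut out by the flag elements $Y^Z_0 \supset \dots \supset Y^Z_{m-n}$ used in Lemma~\ref{flag}. This rests on two facts. First, combinatorial normal crossings of the boundary of $\overline{X}$ in $\mathbb{T}_{\overline{Z}, \overline{X}}$ forces the flag valuation $\nu_{Y_\bullet}$ on sections of $j^*D^Z$ to coincide with the restriction of the toric monomial valuation $\nu_{Y^Z_\bullet}$ on $\overline{Z}$. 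Second, nefness of $D^Z$, together with the fact that $\overline{X}$ meets the open torus $T \subset \overline{Z}$, ensures that the restriction map $H^0(\overline{Z}, D^Z) \to H^0(\overline{X}, j^*D^Z)$ is surjective onto a subspace of sections that saturates the body on $\overline{X}$.

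With the identification in hand, the Minkowski decomposition transfers formally. Given a pseudo-effective $D$ on $X$, its movable part lifts via Proposition~\ref{cones} to a nef class on $\overline{X}$ and then via the Picard isomorphism to a nef $D^Z$ on $\overline{Z}$; any rigid exceptional components of $D$ contribute only a translation to $\Delta_{Y_\bullet}(D)$, since they lie in the fixed part and merely shift the valuative image by $\nu(s_E)$ for the defining sections $s_E$. Since $\{D^Z_i\}$ is a Minkowski basis for $\mathrm{Nef}(\overline{Z})$, one has $\Delta_{Y^Z_\bullet}(D^Z) = \sum_i a_i \Delta_{Y^Z_\bullet}(D^Z_i)$ up to translation, with $a_i \geq 0$, and pushing this equation through the key identification yields
\[
\Delta_{Y_\bullet}(D) \;=\; \sum_i a_i \, \Delta_{Y_\bullet}(D_i)
\]
up to translation, which is the first property. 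Indecomposability of $\Delta_{Y_\bullet}(D_i)$ then follows directly from the indecomposability of $\Delta_{Y^Z_\bullet}(D^Z_i)$ (which holds in the toric setting since the $D^Z_i$ generate extremal rays of $\mathrm{Nef}(\overline{Z})$): any splitting of $\Delta_{Y_\bullet}(D_i)$ would lift through the identification to a splitting of the indecomposable $\Delta_{Y^Z_\bullet}(D^Z_i)$, forcing the summands to be homothetic.

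The main obstacle is the section-theoretic content of the key identification in the second paragraph. Combinatorial normal crossings controls dimensions of intersections and the valuative behaviour of toric monomials upon restriction to $\overline{X}$, but upgrading this to genuine surjectivity of the restriction map in each graded piece requires an additional vanishing statement, such as $H^1(\overline{Z}, \mathcal{I}_{\overline{X}} \otimes kD^Z) = 0$ for $k$ large, or a direct lifting argument compatible with the star subdivisions used to construct $\overline{Z}$ in Theorem~\ref{main}. Once this section lifting is secured, everything else becomes a formal transfer of polytope arithmetic from $\overline{Z}$ down to $\overline{X}$, and then down to $X$ via the infinitesimal flag structure on $\overline{X}$.
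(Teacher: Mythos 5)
Your proposal has the right skeleton: reduce to the toric setting, establish the inclusion $\pi(\Delta_{Y^Z_\bullet}(D^Z)) \subseteq \Delta_{Y_\bullet}(j^*D^Z)$ by truncating valuation vectors of $T$-invariant sections, and then transfer Minkowski additivity and indecomposability through that identification. You also correctly locate the hard step — the reverse inclusion — and you are honest that you have not proved it. But the route you propose to close it (a vanishing statement $H^1(\overline{Z}, \mathcal{I}_{\overline{X}} \otimes kD^Z)=0$ giving surjectivity of the restriction $H^0(\overline{Z},kD^Z)\to H^0(\overline{X},kj^*D^Z)$ for all nef $D^Z$) is both unproved and stronger than what the statement requires; for merely nef $D^Z$ there is no Serre-type vanishing available, and the tropical compactification structure by itself does not give it. So as written this is a genuine gap, not a deferred technicality.

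The paper closes the gap in a different and cheaper way, which avoids asking for section-level surjectivity in the nef case. Both $\pi(\Delta_{Y^Z_\bullet}(D^Z))$ and $\Delta_{Y_\bullet}(j^*D^Z)$ are convex bodies with one contained in the other, so it suffices to show they have equal Euclidean volume. For $A$ ample on $\overline{Z}$ one has the restricted-volume identity $\mathrm{vol}_{\overline{Z}|\overline{X}}(A)=\mathrm{vol}_{\overline{X}}(A|_{\overline{X}})=A^n\cdot\overline{X}$ from ELMNP, and the volume of the projected body is exactly the restricted volume while the volume of $\Delta_{Y_\bullet}(A|_{\overline{X}})$ is the ordinary volume, so the two bodies coincide when $D^Z$ is ample. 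The nef case follows by taking limits of ample classes, and the extension from $\mathrm{Nef}(\overline{X})$ to all of $\mathrm{Mov}(X)$ uses the MDS chamber structure: every movable $D$ on $X$ is nef on one of the finitely many SQM's $X_i$, all of which $\overline{X}$ dominates via the diagram of Section~\ref{section tropicalization}. In short, the missing idea in your argument is the volume comparison (plus running over the Mori chambers via SQM's), which replaces the cohomological vanishing you flag as needed but do not supply.
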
 

\begin{proof}
Note that the inclusion $j: \overline{X} \hookrightarrow \overline{Z}$ allows us to reconstruct the Nef cone of $\overline{X}$, 
in fact \cite[Proposition 5.1]{GM} proves that $j^*\rm Mov(\overline{Z})={\rm Nef}(\overline{X})$. Then the movable cone on $X$ can be reconstructed via push-forward and Proposition \ref{cones}.

We claim that any valuation vector induced by the flag constructed in Lemma \ref{flag} is obtained by the truncation of a valuation vector of the ambient toric variety. In order to prove the claim, 
let us denote by $D^Z$ a divisor on $\overline{Z}$ and by $D$ its restriction to $\overline{X}$. Let $\pi$ be the projection map $\pi: \mathbb{R}^m \to \mathbb{R}^n$, obtained via the truncation. We first need to prove that: 
$$\pi(\Delta_{Y^Z_{\bullet}}(D^Z))= \Delta_{Y_{\bullet}}(D).$$

Since the valuation vectors inducing $\Delta_{Y^Z_{\bullet}}(D^Z)$ are given by $T$-invariant sections, it is clear that $\pi(\Delta_{Y^Z_{\bullet}}(D^Z))\subseteq \Delta_{Y_{\bullet}}(D)$. 

In the general case it is quite difficult to prove the reverse inclusion, sometimes it might even be false. In this case, since $X$ is a MDS, the movable cone is covered by pull-back's of nef cones of small modifications. 
For every movable divisor $D$ on $X$, there exists an SQM $f_i: X \dashrightarrow X_i$ such that $D_i:= f_*D$ is nef. We can then prove the reverse inclusion via the diagram \eqref{diagram} using the following property and the fact that every nef divisor is a limit of ample divisors.
For any ample divisor $A$ on $\overline{Z}$ we have that 
$${\rm vol}_{\overline{Z}|_{\overline{X}}} (A) = {\rm vol}_{\overline{X}}(A|_{\overline{X}}) = A^n \cdot \overline{X},$$
i.e. the restricted volume coincides with the volume of the induced linear series via restriction, see \cite{ELMNP} for precise definitions and properties.  

Now, every movable divisor is a limit of ample divisors in some given small birational model.
Hence we can conclude that the two Newton-Okounkov bodies coincide.

Finally, the additivity of bodies of  $X$ is simply induced by the additivity of the corresponding bodies of $\overline{Z}$ (that holds for the results contained in \cite{PSU}), since this property is preserved under projection.

\end{proof}

\begin{remark}
Lemma \ref{flag} and Theorem \ref{base} give a direct connection between the Minkowski basis for the MDS $X$ and the toric variety where it embeds to. The explicit description also allows us to study restrictions of sections from the toric variety to the MDS.
\end{remark}

\subsection{Application 1}

As an immediate consequence of Theorem \ref{base} we can then state the following result, that answers positively the Question \ref{question global body}

\begin{corollary} \label{global} Let $X$ be a Mori dream space. The global Newton-Okounkov body if $X$ with respect to an admissible flag obtained as in Theorem \ref{base} is a rational polyhedral polytope in $\mathbb{R}^{n+r}$, where $n=\dim(X)$ and $r= \rank(N^1(X))$. 
\end{corollary}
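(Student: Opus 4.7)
The plan is to combine Theorem~\ref{base} with the rationality of toric Newton-Okounkov bodies on the ambient $\overline{Z}$ and to glue across the finitely many Mori chambers of $X$. By definition, the global Newton-Okounkov body of $X$ has fibers $\Delta_{Y_\bullet}(D)$ over classes $D\in\overline{\rm NE}^1(X)$, and by Theorem~\ref{base} each such fiber decomposes, up to a translation depending on $D$, as a Minkowski sum $\sum a_i\,\Delta_{Y_\bullet}(D_i)$ of the finitely many basis pieces. The strategy is therefore to show that (a) each $\Delta_{Y_\bullet}(D_i)$ is a rational polytope and (b) the coefficients $a_i$ and the translation vector depend linearly on $D$ on each of finitely many chambers.

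First I would observe that the ambient quasi-smooth toric variety $\overline{Z}$ admits a Minkowski basis $\{D^Z_i\}_{i\in I}$ for its nef cone whose Newton-Okounkov bodies with respect to the $T$-invariant flag $Y^Z_\bullet$ are rational lattice polytopes, by the toric calculation of \cite{LM} combined with \cite{PSU}; consequently the global Newton-Okounkov body of $\overline{Z}$ is itself a rational polyhedron in $\mathbb{R}^m\times N^1(\overline{Z})_{\mathbb{R}}$. Next I would invoke the projection identity
$$\pi\bigl(\Delta_{Y^Z_\bullet}(D^Z)\bigr)=\Delta_{Y_\bullet}(D)$$
established inside the proof of Theorem~\ref{base}: since $\pi:\mathbb{R}^m\to\mathbb{R}^n$ is a rational coordinate projection, each image $\Delta_{Y_\bullet}(D_i)=\pi(\Delta_{Y^Z_\bullet}(D^Z_i))$ is a rational polytope, verifying (a). Then I would use the Mori chamber decomposition of ${\rm Mov}(X)$: since $X$ is a MDS there are only finitely many SQMs $f_i:X\dashrightarrow X_i$ and ${\rm Mov}(X)=\bigcup f_i^*\,{\rm Nef}(X_i)$ is a finite union of rational polyhedral cones. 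On each such chamber the coefficients $a_i(D)$ in the Minkowski basis description vary linearly with the class of $D$, as one reads off from the toric linear dependence of $\Delta_{Y^Z_\bullet}(D^Z)$ on $D^Z$ via the projection identity; this verifies (b). Finally, extension from ${\rm Mov}(X)$ to the full pseudo-effective cone is handled by the standard Zariski-type splitting into a movable and an exceptional part, the latter contributing a translation linear in $D$ since $\overline{\rm NE}^1(X)$ is rational polyhedral for a MDS. Assembling the finitely many rational polyhedra obtained over the chambers yields the desired single rational polyhedron in $\mathbb{R}^{n+r}$.

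The main obstacle is coherence of the Minkowski description across adjacent chamber walls and the bookkeeping for the exceptional contributions. Coherence on common walls follows from the fact that $\mathcal{D}$ is a \emph{single} global basis on $X$ and that the valuation $\nu_{Y_\bullet}$ is chamber-independent, so the linear functions $a_i(D)$ produced on either side of a wall must agree there. The polyhedrality of the extension from ${\rm Mov}(X)$ to $\overline{\rm NE}^1(X)$ reduces to the rational polyhedrality of the decomposition into positive and exceptional parts in a MDS. Neither point requires new ideas beyond the projection formula of Theorem~\ref{base} and Proposition~\ref{cones}, so together they deliver a rational polyhedron in $\mathbb{R}^{n+r}$ whose fiber over each pseudo-effective $D$ is $\Delta_{Y_\bullet}(D)$, answering Question~\ref{question global body} for smooth Mori dream spaces.
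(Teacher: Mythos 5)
Your argument is correct and supplies exactly the details the paper leaves implicit. The paper itself offers no written proof of Corollary~\ref{global}; it declares the statement an ``immediate consequence'' of Theorem~\ref{base}, and the remark that follows makes clear the intended route is to invoke the criterion of Schmitz--Sepp\"anen (\cite{global2}) that the existence of a Minkowski basis forces polyhedrality of the global Newton-Okounkov body. You do not cite that criterion; instead you re-derive it from first principles: rationality of each $\Delta_{Y_\bullet}(D_i)$ via the projection identity $\pi(\Delta_{Y^Z_\bullet}(D^Z))=\Delta_{Y_\bullet}(D)$ and the rational-polytope calculation on the ambient toric $\overline{Z}$; piecewise-linearity of the coefficients $a_i(D)$ chamber by chamber from the finite Mori chamber decomposition; coherence along walls from continuity; and the extension from $\Mov(X)$ to $\overline{\NE}^1(X)$ by a Zariski-type positive/exceptional splitting whose negative part contributes a translation that is piecewise linear in $D$ on the finitely many chambers of $\Eff(X)$. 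This is a more self-contained version of the same argument the paper has in mind, and the extra work you do is precisely what \cite{global2} packages into a citable criterion. The only caveats worth noting are cosmetic: the corollary as stated omits the smoothness hypothesis that Theorem~\ref{base} and the flag construction require, and you assert rather than prove the linearity of the Minkowski coefficients within a chamber, though that is indeed standard from the toric expression $D^Z=\sum a_i D^Z_i$ and the additivity of Newton-Okounkov bodies under such decompositions.
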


\begin{remark}
Work  done recently led to partial answers to the question posed by Lazarsfeld and Musta{\c{t}}{\u{a}}, Question \ref{question global body}.
In \cite{global1} the author gave an answer to the question for surfaces and, in higher dimension, for Mori dream spaces with certain particular 
flags provided that such flags exist. In \cite{global2} the authors gave a criterion for the polyhedrality of the global Newton-Okounkov body depending on the existence of a Minkowski basis and they proved polyhedrality for certain homogeneous threefolds. 
\end{remark}

\subsection{Application 2}

Via our construction we are able to extend Theorem \ref{theorem anderson} to the case of arbitrary ample divisors on smooth Mori dream spaces. 

\begin{theorem} \label{degeneration}
Let $X$ be a smooth Mori dream space and let $\mathcal{D}$ be the Minkowski basis for $X$ constructed in Theorem \ref{base}. For any ample divisor $A$ on $X$, let $\Delta_{Y_{\bullet}}(A)$ be the Newton-Okounkov body constructed as a Minkowski sum of elements of $\mathcal{D}$. Then $X$ admits a flat degeneration to the toric variety $X_{\Delta_{Y_{\bullet}}(A)}$.
\end{theorem}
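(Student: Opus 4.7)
The strategy is to derive the statement from Anderson's Theorem \ref{theorem anderson}: it suffices to prove that the value semigroup $\Gamma_{Y_\bullet}(A) \subseteq \NN \times \NN^n$ of $A$ with respect to the admissible flag of Lemma \ref{flag} is finitely generated. Once this is established, the flat embedded degeneration of $(X,A)$ to a (possibly non-normal) toric variety with normalization $X_{\Delta_{Y_\bullet}(A)}$ follows immediately.

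The plan is to pull finite generation back from the toric ambient $\overline{Z}$ of Theorem \ref{main}. Using the Minkowski decomposition $\Delta_{Y_\bullet}(A) = \sum a_i \Delta_{Y_\bullet}(D_i)$ granted by Theorem \ref{base} with $D_i = h_* j^* D_i^Z$, I lift coefficients to form $A^Z := \sum a_i D_i^Z$, a big and nef $\QQ$-divisor on $\overline{Z}$ satisfying $h_*j^*A^Z = A$ and $\Delta_{Y^Z_\bullet}(A^Z) = \sum a_i \Delta_{Y^Z_\bullet}(D_i^Z)$. Since $\overline{Z}$ is quasi-smooth toric and the flag $Y^Z_\bullet$ is $T$-invariant, the classical description of $T$-invariant sections as lattice points in the Newton polytope shows that $\Gamma_{Y^Z_\bullet}(A^Z)$ is finitely generated; in fact it is the saturated lattice semigroup inside the cone over $\Delta_{Y^Z_\bullet}(A^Z)$. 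Combined with the projection identity $\pi(\Delta_{Y^Z_\bullet}(A^Z)) = \Delta_{Y_\bullet}(A)$ established inside the proof of Theorem \ref{base}, this already shows that $\Delta_{Y_\bullet}(A)$ is a rational lattice polytope.

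It remains to promote this comparison from the body-level to the semigroup-level. The inclusion $\pi(\Gamma_{Y^Z_\bullet}(A^Z)) \subseteq \Gamma_{Y_\bullet}(A)$ follows because the flag valuation on $\overline{X}$ is obtained by truncation from the toric flag valuation on $\overline{Z}$ (Lemma \ref{flag}), and the combinatorial normal crossings property of the boundary established in Theorem \ref{main}(2) guarantees that sufficiently many $T$-invariant sections restrict nontrivially to $\overline{X}$. The main obstacle is the reverse inclusion: every valuation arising on $X$ from a section of $kA$ must be shown to come from a $T$-invariant section on $\overline{Z}$. I expect to handle this by combining Proposition \ref{hu-keel embedding}(1), which gives the rational isomorphism of Picard groups, with surjectivity of the restriction map $H^0(\overline{Z}, kA^Z) \twoheadrightarrow H^0(\overline{X}, k j^* A^Z)$ for $k \gg 0$; this surjectivity is a Serre vanishing statement for the ideal sheaf of $\overline{X}$ twisted by a sufficiently ample toric line bundle, available on the quasi-smooth toric $\overline{Z}$. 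Once $\Gamma_{Y_\bullet}(A)$ is identified as the image of the finitely generated semigroup $\Gamma_{Y^Z_\bullet}(A^Z)$ under the linear projection $\pi$, it is itself finitely generated, and an application of Theorem \ref{theorem anderson} produces the desired flat degeneration.
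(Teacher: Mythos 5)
Your overall strategy is the same as the paper's: reduce to Anderson's Theorem \ref{theorem anderson} by proving finite generation of the value semigroup, and obtain finite generation by exhibiting the valuation vectors of sections of (multiples of) $A$ as restrictions of $T$-invariant sections on the ambient toric variety $\overline{Z}$. The paper's own proof is in fact even terser than yours; it simply asserts that ``the restriction of torus-invariant sections to $\overline{X}$ will give the exact valuation vector,'' so in expanding this into a lifting argument with a divisor $A^Z=\sum a_i D_i^Z$ and a projection $\pi$, you are essentially trying to make precise what the paper leaves implicit.

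That said, two steps in your expansion are not yet solid. First, the restriction surjectivity $H^0(\overline{Z},kA^Z)\twoheadrightarrow H^0(\overline{X},kj^*A^Z)$ is attributed to Serre vanishing, but $A^Z$ is built from generators of $\Nn\mathrm{ef}(\overline{Z})$ and so is nef, not necessarily ample; the ample divisor $A$ on $X$ only pulls up to a big and nef class once $h$ contracts something. You would need Fujita-type vanishing (ample plus nef twists) or a perturbation argument rather than Serre vanishing as stated. Second, and more substantially, surjectivity of the restriction map does not by itself give the containment $\Gamma_{Y_\bullet}(A)\subseteq\pi(\Gamma_{Y^Z_\bullet}(A^Z))$: a section of $kj^*A^Z$ on $\overline{X}$ is a linear combination of restricted $T$-invariant sections, and if two such restrictions share the leading valuation vector, their combination can have a strictly larger valuation that is not the valuation of any single $T$-invariant restriction. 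So ``$\Gamma_{Y_\bullet}(A)$ is the image of $\Gamma_{Y^Z_\bullet}(A^Z)$ under $\pi$'' needs a different justification; what you actually want to show is that $\pi(\Gamma_{Y^Z_\bullet}(A^Z))$ already equals the full lattice semigroup of the cone over $\Delta_{Y_\bullet}(A)$ (using saturation on the toric side and surjectivity of the coordinate projection on lattice points), which then squeezes $\Gamma_{Y_\bullet}(A)$ to equal it. This gap is present in the paper's proof as well, so your proposal is at least as complete as the original, but it should not be asserted as a consequence of surjectivity alone.
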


\begin{proof}
The main condition for the existence of the flat degeneration is the finite generation of the value semigroup. This condition yields that the extremal points of the Newton-Okounkov body shall correspond to the valuation of some section (i.e. the point is not a limit but an actual valuation vector). 

By the construction, the restriction of torus-invariant sections to $\overline{X}$ will give the exact valuation vector. Even more, the inclusion map will ensure that the section corresponds to a section of  chosen ample line bundle $A$.
\end{proof}

\section{Examples} \label{examples}
\subsection{Example 1}

Let  $X=\Bl_4(\PP^2)$ the del Pezzo surface of degree $5$, namely  the blow-up of $\PP^2$ in $4$ 
points in linearly general position. This is isomorphic to 
$\overline{M}_{0,5}$,. Let $H$ denote the pull-back of the class of a line of $\PP^2$ and let $E_1,\dots, E_4$ denote the classes of the exceptional divisors. 
We have 
$$\Pic(X)=\langle H,E_1,\dots,E_4\rangle.$$
After a change of basis, we can take $\{ H-E_3-E_4,E_1,\dots,E_4\}$ to be a set of generators of the Picard group.
Moreover 
$\Cox(X)$ is generated by the classes 
$$\{E_{ij}:=H-E_i-E_j: 1\le i< j \le 4\}\cup\{E_i:1\le i\le 4\}.$$
We now rewrite the generators of $\Cox$ in the new basis of $\Pic(X)$:
\begin{align*}
H-E_i-E_j&= (H-E_3-E_4)-E_i-E_j+E_3+E_4, \ \forall1\le i<j\le4\\
E_i&= E_i,\ \forall1\le i\le4.
\end{align*}
Using this, we can describe the map $$\deg:\Cox(X)\to \Pic(X)$$ with the following $5\times10$ matrix
\begin{center}
\begin{tabular}{lc}
  \ & \small{$\begin{array}{rrrrrrrrrr}
E_{12}&E_{13}&E_{14}&E_{23}&E_{24}&E_{34}&E_1&E_2&E_3&E_4\end{array}$} \\
 \small{$\begin{array}{l} E_{34}\\ 
E_1\\E_2\\E_3\\E_4\end{array}$}  & $\left(
\begin{array}{rrrrrrrrrr}
\ \ 1  &\ \ 1 \ &\ 1\ &\ 1\ &\ \ 1\ &\ \ 1\ &\ \ 0\ &\ \ 0\  &\ \ 0\ &\ \ 0\ \\
-1 & -1&  -1&0&0&0&1&0&0&0\\
 -1 &0&0&-1&-1&0&0&1&0&0\\
1&0&1&0&1&0&0&0&1&0\\
1&1&0&1&0&0&0&0&0&1\\
\end{array}\right).$ 
\end{tabular}
\end{center}

Let $A$ be the $5\times5$ matrix such that the above matrix is the concatenated matrix
$$
\left(
\begin{array}{c|c}
A& I_5
\end{array}\right).
$$
The matrix representing $$\ker(\deg)\to \Cox(X)$$ is defined by the Gale transform 
$$\left(\begin{array}{c}
I_5\\
\hline
-A
\end{array}\right)=
\left(
\begin{array}{ccccc}
1&0&0&0&0\\
0&1&0&0&0\\
0&0&1&0&0\\
0&0&0&1&0\\
0&0&0&0&1\\
-1&-1&-1&-1&-1\\
1&1&1&0&0\\
1&0&0&1&1\\
-1&0&-1&0&-1\\
-1&-1&0&-1&0
\end{array}
\right).
$$

The rows of the above matrix define the fan  $\mathbb{Z}^5$ of a blow-up of $\PP^5$.
Precisely, the first six rays are the rays of the fan of $\PP^5$ in the standard 
basis of $\mathbb{Z}^5$. Let $x_1,\dots, x_5,x_0$ be corresponding homogeneous 
coordinates.
The four last rays define  
exceptional divisors of the blow-up of $T$-invariant $2$-planes of $\PP^5$, with $T=(\mathbb{C}^\ast)^5$,  defined by
\begin{align*}
\Pi_1=&\{x_1=x_2=x_3=0\},\\
\Pi_2=&\{x_1=x_4=x_5=0\},\\
\Pi_3=&\{x_0=x_2=x_4=0\},\\
 \Pi_4=&\{x_0=x_3=x_5=0\}.
\end{align*}
Notice that $\{\Pi_i\cap\Pi_j:i\neq j\}$ is the set of the six coordinate points of $\PP^5$.

The map $i:X\to Z$ is the embedding described in Proposition \ref{hu-keel embedding}.

\begin{remark} The embedding might not be unique since it  depends on the linearization by an ample line bundle. However the tropical compactification will be the same. 
Levitt in \cite{Lev} computed a set of equations for such an embedding that do not respect intersection conditions and cannot be the image of the embedding constructed by \cite{HK}.  
\end{remark}

Let $P$ be the $2-$plane of $\pp^5$ described by the following equations:
$$x_2+x_4+x_0=0, \quad x_1+x_4+x_5=0, \quad x_2+x_3+x_4+x_5=0,$$
with the following intersections
\begin{align*}
p_1:=&P\cap \Pi_1=[0,0,0,1,-1,-1],\\
p_2:=&P\cap \Pi_2=[0,1,-1,0,0,-1],\\
p_3:=&P\cap \Pi_3=[-1,0,-1,0,1,0],\\
p_4:=&P\cap \Pi_4=[-1,-1,0,1,0,0].\\
\end{align*}
The image of $X$ inside $Z$ is the blow-up of $P$ at $p_1,p_2,p_3,p_4$.

\medskip 

Using \emph{Gfan}, we computed the tripicalization of $P\subset \PP^5$ obtaining the following description.

\begin{lstlisting} 
Q[x1,x2,x3,x4,x5]
{x2+x4+1,
x1+x4+x5,
x2+x3+x4+x5
}
LP algorithm being used: "cddgmp".
_application fan
_version 2.2
_type SymmetricFan

AMBIENT_DIM
5

DIM
2

LINEALITY_DIM
0

RAYS
-1 -1 -1  0 0  # 0 
-1 0 0 -1 -1   # 1 
-1 0 0 0 0     # 2
0 -1 0 0 0     # 3
0 0 -1 0 0     # 4
0 0 0 -1 0     # 5 
0 0 0 0 -1     # 6
1  0  1  0  1  # 7
1  1  0  1 0   # 8 
1  1  1  1  1  # 9

N_RAYS
10

F_VECTOR
1 10 15

SIMPLICIAL
1

PURE
1

CONES

 Dimension 0
{}  

 Dimension 1
{0}{1}{2}{3}{4}{5}{6}{7}{8}{9}

 Dimension 2
{0 2}{1 2}{0 3}{0 4}{1 5}{1 6}{3 6}{4 5}{2 9}{3 7}{5 7}{4 8}
{6 8}{7 9}{8 9}

\end{lstlisting}

Let $\mathcal{E}_i$ denote the exceptional divisor of $\Pi_i$ in $Z$ and let  $\mathcal{H}$ denote the hyperplane class in $Z$. Let us use the notation $\mathcal{E}_{ij}:=\mathcal{H}-\mathcal{E}_i-\mathcal{E}_j$ for the sake of simplicity.
The $1-$ and $2-$dimensional cones of the fan correspond to the orbits in $Z$ that intersect $X$. The rays correpond to the divisors $\mathcal{E}_{i}$ and $\mathcal{E}_{ij}$ The $2$-cone $\{0,2\}$ corresponds to $\mathcal{E}_1 . \mathcal{E}_{12}$, the class of a hyperplane on the exceptional divisor $\mathcal{E}_1$ of $\Pi_1$ in $Z$;
the $2-$cone $\{3,6\}$ corresponds to the intersection $\mathcal{E}_{13}.\mathcal{E}_{24}$, that is the strict transform on $Z$ of the $3-$plane spanned by the points $\Pi_i\cap\Pi_j$, with $(i,j)\in\{1,3\}\times\{2,4\}$
Whilst, for example, let us consider the cone $\{0,1\}$ that corresponds to the intersection $\mathcal{E}_1 . \mathcal{E}_2$, that is the strict transform in $Z$ of the point of $\PP^5$ of coordinates $[0,0,0,0,0,1]$: this orbit does not intersect $X$ and it does not appear in the tropicalization.

 Note that the rays exactly coincide with the ones appearing in the fan of $Z$. This is explained by the fact that a surface does not admit flips and hence we have a unique chamber of the Movable cone given by the Nef cone for $X$.

The combinatorial normal crossing property of the boundary allows us to induce a flag from the ambient toric variety $Z$ to the MDS $X$.

Let us consider, for example, the maximal cone in $\Sigma_{Z}$ generated by $\{ \mathcal{E}_{13},\mathcal{E}_{24}, \mathcal{E}_{14}, \mathcal{E}_{23},  \mathcal{E}_{3}\}$. This gives a $T$-invariant flag by complete intersection. The tropicalization selects all the subcones that induce a flag on $X$: in this case  one could choose the following $\{\mathcal{E}_{13}, \mathcal{E}_{24}\}$, $\{\mathcal{E}_{14}, \mathcal{E}_{23}\}$, $\{\mathcal{E}_{13}, \mathcal{E}_{3}\}$ or $\{\mathcal{E}_{23}, \mathcal{E}_{3}\}$.

It is then a simple computation to show that the Minkowski basis is obtained by projection to the corresponding selected components.

\subsubsection{A toric degeneration of the del Pezzo surface of degree $5$}
We consider $X$ embedded in ${\mathbb{P}^5}$ with the linear system of cubics through four points, given by the (ample) anticanonical divisor $A=3H-E_1-E_2-E_3-E_4$.

Let us consider on $Z$ the admissible flag 
$$Y^Z_\bullet: Z\supset\mathcal{E}_{13}\supset (\mathcal{E}_{13}\cap\mathcal{E}_{24})\supset\cdots$$
that induces the following admissible flag on $X$
$$Y_\bullet: X\supset E_{13}\supset \{(E_{13}\cap E_{24})\},$$
where $E_{ij}$ denotes the strict transform on $X$ of the line in $\mathbb{P}^2$ passing through two of the four blown-up points, using Lemma \ref{flag}.
Following Theorem \ref{base}, we find a Minkowski basis  for $X$ which is given by the following divisor:
$$\
\{H, H-E_i, H-E_i-E_j, 2H-E_1-E_2-E_3-E_4: i,j, i\ne j\}.
$$

One can compute the corresponding Newton-Okounkov bodies. For instance
the Newton-Okounkov body of $H$ is a triangle with vertices $(0,0),(0,1),(1,0)$ in $\mathbb{R}^2$, while the Newton-Okounkov body of $2H-E_1-E_2-E_3-E_4$ is a segment with vertices $(0,0),(1,1)$.
The ample divisor $A$ admits the Minkowski decomposition $A=H+(2H-E_1-E_2-E_3-E_4)$ and in particular  its Newton-Okounkov body is obtained as the Minkowski sum of the two polytopes of $H$ and $2H-E_1-E_2-E_3-E_4$, see Figure \ref{figure cubica}.
This gives rise to a toric degeneration of the del Pezzo surface $(X,A)$ to the singular toric surface $X_{\Delta_{Y_{\bullet}}(A)}$ obtained as the blow-up of $\mathbb{P}^2$ in two pairs of infinitely near points.

\begin{figure}[h]
\begin{center}\begin{picture}(0,30)(0,0)
\put(0,0){\line(0,1){20}}
\put(0,0){\line(1,0){20}}
\put(0,20){\line(1,-1){20}}
\put(-1.5,-1.5){\tiny{$\bullet$}}\put(-1.5,18.5){\tiny{$\bullet$}}
\put(18.5,-1.5){\tiny{$\bullet$}}
\end{picture} \quad\quad \quad\quad  +\quad\quad 
\begin{picture}(0,30)(0,0)
\put(0,0){\line(1,1){20}}
\put(-1.5,-1.5){\tiny{$\bullet$}}\put(18.5,18.5){\tiny{$\bullet$}}
\end{picture} \quad\quad\quad\quad
=
\quad\quad
\begin{picture}(0,30)(0,0)
\put(0,0){\line(0,1){20}}
\put(0,0){\line(1,0){20}}
\put(20,0){\line(1,1){20}}
\put(0,20){\line(1,1){20}}
\put(20,40){\line(1,-1){20}}
\put(-1.5,-1.5){\tiny{$\bullet$}}\put(-1.5,18.5){\tiny{$\bullet$}}
\put(18.5,-1.5){\tiny{$\bullet$}}\put(18.5,18.5){\tiny{$\bullet$}}\put(38.5,18.5){\tiny{$\bullet$}}\put(18.5,38.5){\tiny{$\bullet$}}
\end{picture}
\caption{Newton-Okounkov body of $3H-\sum_{i=1}^4E_i$}\label{figure cubica}
\end{center}
\end{figure}
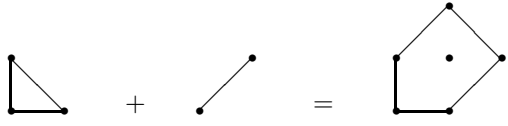


\subsection{Example 2}
Let now $X:=\Bl_{5}(\PP^3)$ be the blow-up of $\PP^3$ in $5$ points in linearly general position. Let us denote by $H$ the hyperplane class and by $E_1,\dots,E_5$ the exceptional divisors. 
The following divisors form a basis for the Picard group of $X$:
$$H-E_3-E_4-E_5, E_1,\dots,E_5.$$
Generators of the Cox ring of $X$ are:
$$\{H-E_{i_1}-E_{i_2}-E_{i_3}: 1\le i_1< i_2<i_3 \le 5\}
\cup\{E_i:1\le i\le 5\}.$$

Let $Y_\bullet$ be the flag on $X$ obtained starting from the flag $\{\mathcal{E}_{13}, \mathcal{E}_{13}\cap \mathcal{E}_{24},\dots\}$ on $Z$.

As in the previous example, the maps $\deg:\Cox(X)\to \Pic(X)$ and $\ker(\deg)\to \Cox(X)$
are defined, respectively,  by the following fundamental matrices:
$$
\left(
\begin{array}{c|c}
A& I_6
\end{array}\right):=
\left(
\begin{array}{rrrrrrrrr|rrrrrr}
1&1&1&1&1&1&1&1&1&1&0&0&0&0&0\\
-1&-1&-1&-1&-1&-1&0&0&0&0&1&0&0&0&0\\
-1&-1&-1&0&0&0&-1&-1&-1&0&0&1&0&0&0\\
0&1&1&0&0&1&0&0&1&0&0&0&1&0&0\\
1&0&1&0&1&0&0&1&0&0&0&0&0&1&0\\
1&1&0&1&0&0&1&0&0&0&0&0&0&0&1\\
\end{array}\right),
$$
and
$$\left(\begin{array}{c}
I_9\\
\hline
-A
\end{array}\right)=
\left(
\begin{array}{ccccccccc}
1&0&0&0&0&0&0&0&0\\
0&1&0&0&0&0&0&0&0\\
0&0&1&0&0&0&0&0&0\\
0&0&0&1&0&0&0&0&0\\
0&0&0&0&1&0&0&0&0\\
0&0&0&0&0&1&0&0&0\\
0&0&0&0&0&0&1&0&0\\
0&0&0&0&0&0&0&1&0\\
0&0&0&0&0&0&0&0&1\\
-1&-1&-1&-1&-1&-1&-1&-1&-1\\
1&1&1&1&1&1&0&0&0\\
1&1&1&0&0&0&1&1&1\\
0&-1&-1&0&0&-1&0&0&-1\\
-1&0&-1&0&-1&0&0&-1&0\\
-1&-1&0&-1&0&0&-1&0&0
\end{array}\right).
$$
The rows of the above matrix define a simplicial fan. The associated quasi-smooth projective variety is the variety  $Z$ that appears in Proposition \ref{hu-keel embedding}, that we can describe as follows. 
The first $9$ rows give rise to the fan of $\PP^9$, while the five last rows represent the
exceptional divisors of the blow-up of $\PP^9$ along the
torus invariant $3-$planes defined by
\begin{align*}
\Pi_1=&\{x_1=x_2=x_3=x_4=x_5=x_6=0\},\\
\Pi_2=&\{x_1=x_2=x_3=x_7=x_8=x_9=0\},\\
\Pi_3=&\{x_0=x_1=x_4=x_5=x_7=x_8=0\},\\
\Pi_4=&\{x_0=x_2=x_4=x_6=x_7=x_9=0\},\\
\Pi_5=&\{x_0=x_3=x_5=x_6=x_8=x_9=0\}.
\end{align*}
Therefore $Z$ is the blow-up of $\PP^9$ along  $\Pi_1,\dots,\Pi_5$.

\medskip 

We now describe the embedding $X\subset Z$ of Proposition \ref{hu-keel embedding}.
Consider the $3-$plane $P$ of $\PP^9$, that intersects each $\Pi_i$ in a point, defined by
\begin{align*}
x_1-x_2+x_3=0, \quad  x_1-x_7+x_8=0, \quad x_6-x_9+x_0=0\ \\
x_5-x_8+x_0=0, \quad x_2-x_7+x_9=0,\quad  x_1-x_4+x_5=0.
\end{align*}
The intersection points are given by
\begin{align*}
p_1:=&P\cap \Pi_1=[0,0,0,0,0,0,1,1,1,1],\\
p_2:=&P\cap \Pi_2=[0,0,0,1,1,1,0,0,0,-1],\\
p_3:=&P\cap \Pi_3=[0,1,1,0,0,-1,0,0,-1,0],\\
p_4:=&P\cap \Pi_4=[1,0,-1,0,-1,0,0,-1,0,0],\\
p_5:=&P\cap \Pi_5=[1,1,0,1,0,0,1,0,0,0].
\end{align*}
It is easy to check that $P\cap \Pi_i\subset\PP^9$ is a point, say $p_i$. 
The image of $X$ in $Z$ is the blow-up of $P$ in $p_1,\dots,p_5$.

\begin{remark}
Obviously the choice of a subspace $P$ satisfying the same properties is not unique. For instance 
the following set of equations describes another such a $3-$plane.
\begin{align*}
 x_1+x_7+x_8=0, \quad  x_6+x_9+x_0=0, \quad x_5+x_8+x_0=0,  & \\
 x_2+x_7+x_9=0, \quad x_4+x_5+x_7+x_8=0, \quad  x_3+x_6+x_8+x_0=0. &
\end{align*}

\end{remark}

\medskip

Using \emph{Gfan}, we computed the tripicalization of $P\subset \PP^9$ obtaining for both sets of equations the following 
description.

\begin{lstlisting} 
Q[x1,x2,x3,x4,x5,x6,x7,x8,x9]
{x1+x7+x8,
x6+x9+1,
x5+x8+1,
x2+x7+x9,
x4+x5+x7+x8,
x3+x6+x8+1
}
LP algorithm being used: "cddgmp".
_application fan
_version 2.2
_type SymmetricFan

AMBIENT_DIM
9

DIM
3

LINEALITY_DIM
0

RAYS
-1 -1 -1 -1 -1 -1 0 0 0    # 0 (*)
-1 -1 -1 0 0 0 -1 -1 -1    # 1 (*)
-1 -1 -1 0 0 0 0 0 0    # 2
-1 0 0 -1 -1 0 0 0 0    # 3
-1 0 0 0 0 0 -1 -1 0    # 4
-1 0 0 0 0 0 0 0 0    # 5 (*)
0 -1 0 -1 0 -1 0 0 0    # 6
0 -1 0 0 0 0 -1 0 -1    # 7
0 -1 0 0 0 0 0 0 0    # 8 (*)
0 0 -1 0 -1 -1 0 0 0    # 9
0 0 -1 0 0 0 0 -1 -1    # 10
0 0 -1 0 0 0 0 0 0    # 11 (*)
0 0 0 -1 0 0 0 0 0    # 12 (*)
0 0 0 0 -1 0 0 0 0    # 13 (*)
0 0 0 0 0 -1 0 0 0    # 14 (*)
0 0 0 0 0 0 -1 0 0    # 15 (*)
0 0 0 0 0 0 0 -1 0    # 16 (*)
0 0 0 0 0 0 0 0 -1    # 17 (*)
0 1 1 0 0 1 0 0 1    # 18 (*)
1 0 1 0 1 0 0 1 0    # 19 (*)
1 1 0 1 0 0 1 0 0    # 20 (*)
1 1 1 0 1 1 0 1 1    # 21
1 1 1 1 0 1 1 0 1    # 22
1 1 1 1 1 0 1 1 0    # 23
1 1 1 1 1 1 1 1 1    # 24 (*)

N_RAYS
25

F_VECTOR
1 25 105 105

SIMPLICIAL
1

PURE
1

\end{lstlisting}

The rays marked with $(\ast)$ are the rows of the matrix defining $Z$ (modulo a sign change). There are 10 more rays, that we are now going to interpret. 

Notice first that the span of each pair of $3-$planes $\langle\Pi_i,\Pi_j\rangle$ is a $6-$plane in $\PP^9$. Moreover the $3-$plane  $P$ intersects 
$\langle\Pi_i,\Pi_j\rangle$ in the line $\langle p_i,p_j\rangle$.
There are exactly $10$ of these $6-$planes in $\PP^9$ and, consequently, 10 of these lines in $P$.

Each of the remaining 10 rays above is the exceptional divisor of the blow-up of a (strict transform in $Z$ of the) $6$-plane $\langle\Pi_i,\Pi_j\rangle$. For instance the ray $\# 2$ (which is the sum of the rays $\# 5$, $\# 8$, $\# 11$ ) gives the exceptional divisors of the blow-up of the $6-$plane of $\PP^9$ of equations $x_1=x_2=x_3=0$, namely the one spanned by $\Pi_1$ and $\Pi_2$.

Now, in the notation of  Section \ref{section tropicalization}, the toric variety $\overline{Z}$ is the blow-up of $Z$ along all these 10 $6-$planes $\langle\Pi_i,\Pi_j\rangle$.
Moreover $\overline{X}\subset \overline{Z}$ is the blow-up of $X\subset Z$ along the 10 corresponding lines $\langle p_i,p_j\rangle$.
THerefore $\overline{X}\cong\overline{M}_{0,7}$, cfr. Section \ref{moduli section}.
The blow-up $\overline{X}$ 
resolves all the flips of $(-1)-$curves (given by the strict transforms of the  lines $\langle p_i,p_j\rangle$ in $X$).

The subfan $\Sigma_{\overline{Z},\overline{X}}$ is supported on the tropicalization of $P$  and it is described in the rest of the \emph{Gfan} scrip below. Notice that all rays and $2-$dimensional cones of  $\Sigma_{\overline{Z}}$ appear in $\Sigma_{\overline{Z},\overline{X}}$ because $\overline{X}$ intersects all $T$-invariant subvarieties of coidimension $1$ and $2$, just by a simple dimension count.

\medskip

\begin{lstlisting}
CONES

 Dimension 0
{}  

 Dimension 1
{0}{1}{2}{3}{4}{5}{6}{7}{8}{9}{10}{11}{12}{13}
{14}{15}{16}{17}{18}{19}{20}{21}{22}{23} {24}

 Dimension 2
{0 2}{1 2}{0 3}{0 5}{1 4}{1 5}{2 5}{3 5}{4 5}
{0 6}{0 8}{1 7}{1 8}{2 8}{0 9}{0 11}{1 10}{1 11}{2 11}
{0 12}{0 13}{0 14}{1 15}{1 16}{1 17}{3 12}{3 13}{3 17}
{4 14}{5 14}{4 15}{4 16}{5 17}{3 18}{4 18}{5 18}{6 8}{7 8}
{6 12}{6 14}{6 16}{7 13}{8 13}{7 15}{7 17}{8 16}{9 11}
{10 11}{10 12}{11 12}{9 13}{9 14}{9 15}{11 15}{10 16}
{10 17}{12 16}{12 17}{13 15}{13 17}{14 15}{14 16}{2 24}
{12 18}{13 18}{15 18}{16 18}{5 23}{5 24}{6 19}{7 19}{8 19}
{12 19}{14 19}{15 19}{17 19}{8 22}{8 24}{9 20}{10 20}{11 20}
{11 21}{13 20}{14 20}{16 20}{17 20}{11 24}{12 21}{15 21}
{13 22}{16 22}{14 23}{17 23}{18 21}{18 22}{18 24}{19 21}
{19 23}{19 24}{20 22}{20 23}{20 24}{21 24}{22 24}{23 24}


MAXIMAL_CONES  Dimension 3
{0 2 5} {1 2 5}{0 3 5}{1 4 5}
{0 2 8}{1 2 8}{0 2 11}{1 2 11}{0 3 12}
{0 3 13}{0 5 14}{1 4 15}{1 4 16}{1 5 17}{3 5 17}
{4 5 14}{3 5 18}{4 5 18}{0 6 8}{1 7 8}{0 6 12}
{0 6 14}{0 8 13}{1 7 15}{1 7 17}{1 8 16}{0 9 11}
{1 10 11}{0 11 12}{0 9 13}{0 9 14}{1 11 15}
{1 10 16}{1 10 17}{3 12 17}{3 13 17}{4 14 15}
{4 14 16}{2 5 24}{3 12 18}{3 13 18}{4 15 18}{4 16 18}
{6 8 16}{7 8 13}{6 12 16}{6 14 16}{7 13 15}{7 13 17}
{2 8 24}{10 11 12}{9 11 15}{10 12 16}{10 12 17}
{9 13 15}{9 14 15}{2 11 24}{12 16 18}{13 15 18}
{5 14 23}{5 17 23}{5 18 24}{6 8 19}{7 8 19}{6 12 19}
{6 14 19}{7 15 19}{7 17 19}{12 17 19}{14 15 19}
{8 13 22}{8 16 22}{9 11 20}{10 11 20}{9 13 20}
{9 14 20}{10 16 20}{10 17 20}{11 12 21}{11 15 21}
{13 17 20}{14 16 20}{12 18 21}{15 18 21}{13 18 22}
{16 18 22}{5 23 24}{8 19 24}{12 19 21}{15 19 21}
{14 19 23} {17 19 23} {8 22 24} {11 20 24} {11 21 24}
{13 20 22} {16 20 22} {14 20 23} {17 20 23} {18 21 24}
{18 22 24} {19 21 24} {19 23 24} {20 22 24} {20 23 24}
\end{lstlisting}


\end{document}